\documentclass{article}

\usepackage{amsmath,amsthm,amssymb,amsfonts, graphicx}
\usepackage{graphics}
\usepackage{authblk}
\usepackage{upgreek}
\usepackage{amsrefs,hyperref}
\usepackage{amsthm}
\usepackage{thmtools}
\usepackage{cleveref}
\usepackage{dsfont}
\topmargin0.0cm
\headheight0.0cm
\headsep0.0cm
\oddsidemargin0.0cm
\textheight23.0cm
\textwidth16.5cm
\footskip1.0cm
\theoremstyle{plain}
\newtheorem{theorem}{Theorem}[section]
\newtheorem{corollary}[theorem]{Corollary}

\newtheorem{lemma}[theorem]{Lemma}
\newtheorem{proposition}[theorem]{Proposition}

\theoremstyle{definition}
\newtheorem{definition}[theorem]{Definition}
\theoremstyle{remark}
\newtheorem{remark}[theorem]{Remark}
\numberwithin{equation}{section}
\newcommand{\diff}{\mathop{}\!\mathrm{d}}

\DeclareMathOperator{\sgn}{sgn}


\title{Singular, finite-time $L^2$ attractors for odd, smooth solutions of Burgers equation on the torus}
\author[1]{Evan Miller}
\affil[1]{University of Maine,

evan.miller1@maine.edu}

\begin{document}
\maketitle

\begin{abstract}
    In this paper, we show that the positive multiples of a particular function $F$---which is singular with a jump discontinuity at the origin---are finite-time global attractors in $L^2$ for generic odd, smooth solutions of the one dimensional inviscid Burgers equation. Furthermore, the identity that leads to this result provides to an alternative proof of finite-time blowup for the fractal Burgers equation in the supercritical range $0<\alpha<\frac{1}{2}$. This proof is based on lower bounds on a Lyapunov functional given by the inner product of the solution with the global attractor $F$. We will also show that this property holds for a broader class of odd functions that are strictly increasing on $(0,\pi)$.
\end{abstract}

\section{Introduction}

The Burgers equation is a simple, but fundamentally important nonlinear PDE; it is the simplest nonlinear PDE to develop shock-type singularities. It is straightforward to compute these singular solutions semi-explicitly using the method of characteristics. The 1D viscous Burgers equation is given by
\begin{equation} \label{BurgersEqn}
    \partial_t u+u\partial_x u=0,
\end{equation}
and this equation can also be expressed in ``divergence form" as
\begin{equation}
    \partial_t u+\frac{1}{2}\partial_x u^2=0. 
\end{equation}
Burgers introduced and analyzed this equation, along with the viscous Burgers equation 
\begin{equation}
    \partial_t u-\nu \partial_x^2u+u\partial_x u=0,
\end{equation}
with viscosity $\nu>0$ in \cites{Burgers1939,Burgers1954a,Burgers1954b}, although the viscous Burgers equation first appeared in the work of Bateman \cite{Bateman}.
Hopf \cite{Hopf1950} and Cole \cite{Cole1951} independently proved global-in-time existence for smooth solutions of the viscous Burgers equation by taking a transformation (now known as the Hopf-Cole transformation) that allows the solution to be expressed explicitly in terms of a solution of the heat equation.

In this paper, we will consider the finite-time blowup problem for the inviscid Burgers equation and for the Burgers equation with fractional dissipation.
Consider the $2\pi$-periodic function
\begin{align}
    F(x)
    &=
    \begin{cases}
    -\pi+x, & 0<x\leq \pi \\
    0, & x=0 \\
    \pi+x, & -\pi\leq x<0
    \end{cases} \\
    &=
    -2\sum_{n=1}^\infty
    \frac{\sin(nx)}{n}.
\end{align}
We will show that for odd functions, there is an interesting identity for the inner product of $F$ with the Burgers nonlinearity. For all $2\pi$-periodic, odd $u\in \dot{H}^1\left(\mathbb{T}\right)$,
\begin{equation} \label{KeyIdIntro}
    \left<u\partial_xu,F\right>
    =
    \int_{-\pi}^\pi u(x)\partial_xu(x)F(x)\diff x
    =
    -\frac{1}{2}
    \|u\|_{L^2}^2.
\end{equation}
This immediately implies that any positive multiple of $F$ is an attractor for odd solutions of Burgers equation.

\begin{theorem} \label{GeneralAttractorThmIntro}
Suppose $u\in C\left([0,T_{max});
    C^1\left(\mathbb{T}\right)\right)$
    is an odd solution of Burgers equation and that $r>0$.
    Then for all $0\leq t<T_{max}$,
    \begin{equation}
    \frac{\diff}{\diff t}\|u(\cdot,t)-rF\|_{L^2}^2
    =
    -r\|u\|_{L^2}^2,
    \end{equation}
    and therefore
    \begin{equation}
    \|u(\cdot,t)-rF\|_{L^2}^2
    =
    \left\|u^0-rF\right\|_{L^2}^2
    -r\left\|u^0\right\|_{L^2}^2t.
    \end{equation}
    Note that this implies the solution blows up in finite-time with
    \begin{equation}
    T_{max}
    \leq
    \frac{\left\|u^0-rF\right\|_{L^2}^2}
    {r\left\|u^0\right\|_{L^2}^2}.
    \end{equation}
\end{theorem}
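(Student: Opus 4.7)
The plan is to compute $\frac{d}{dt}\|u(\cdot,t)-rF\|_{L^2}^2$ directly. Expanding the square,
\begin{equation*}
\|u(\cdot,t)-rF\|_{L^2}^2 = \|u(\cdot,t)\|_{L^2}^2 - 2r\langle u(\cdot,t), F\rangle + r^2\|F\|_{L^2}^2,
\end{equation*}
the last term is constant in $t$, and the first is conserved under Burgers evolution: pairing $\partial_t u = -u\partial_x u$ with $u$ and integrating gives $\frac{d}{dt}\|u\|_{L^2}^2 = -\int_{-\pi}^{\pi}\partial_x(\tfrac{1}{3}u^3)\,dx = 0$ by periodicity. For the middle term, $\frac{d}{dt}\langle u, F\rangle = \langle \partial_t u, F\rangle = -\langle u\partial_x u, F\rangle$, which by the key identity \eqref{KeyIdIntro} equals $\tfrac{1}{2}\|u\|_{L^2}^2$. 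Combining yields $\frac{d}{dt}\|u-rF\|_{L^2}^2 = -r\|u\|_{L^2}^2$, the first claim.

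Before invoking \eqref{KeyIdIntro} at positive times, one must check that oddness is propagated by the flow. This is immediate from the symmetry of the nonlinearity: if $u$ is odd in $x$, then $\partial_x u$ is even and $u\partial_x u$ is odd, so $\partial_t u$ is odd and uniqueness of $C^1$ solutions preserves the odd class. Since $u \in C([0,T_{max}); C^1(\mathbb{T}))$ lies in $\dot H^1(\mathbb{T})$ at each time, \eqref{KeyIdIntro} applies pointwise in $t$.

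The explicit formula then follows by using $L^2$ conservation to replace $\|u(\cdot,t)\|_{L^2}^2$ by the constant $\|u^0\|_{L^2}^2$ on the right-hand side and integrating from $0$ to $t$. The blowup bound is automatic from nonnegativity of the left-hand side: $\|u^0-rF\|_{L^2}^2 - r\|u^0\|_{L^2}^2 t \geq 0$ for $t \in [0, T_{max})$ forces $T_{max} \leq \|u^0-rF\|_{L^2}^2/(r\|u^0\|_{L^2}^2)$. Given \eqref{KeyIdIntro}, there is no substantial obstacle to overcome; the only care needed is in the two symmetry/conservation checks above.
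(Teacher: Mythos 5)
Your proposal is correct and follows essentially the same route as the paper's proof: expand $\|u-rF\|_{L^2}^2$, use conservation of the $L^2$ norm for the first term, apply the key identity \eqref{KeyIdIntro} to the cross term, and integrate. The extra checks you include (propagation of oddness, explicit verification of $L^2$ conservation) are sound and only add rigor; aside from a harmless constant in the $\partial_x(u^3)$ computation, there is nothing to change.
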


    If we minimize over $r>0$ to see which multiple $rF$ is reached in the shortest time, we find that it is the multiple with the same $L^2$ norm as the solution $u$, with
    \begin{equation}
    \inf_{r>0}
    \frac{\left\|u^0-rF\right\|_{L^2}^2}
    {r\left\|u^0\right\|_{L^2}^2}
    =
    \frac{\left\|u^0-r_0F\right\|_{L^2}^2}
    {r_0\left\|u^0\right\|_{L^2}^2},
    \end{equation}
    where 
    \begin{equation}
    r_0=\frac{\left\|u^0\right\|_{L^2}}{\|F\|_{L^2}}.
    \end{equation}

\begin{theorem} \label{AttractorThmIntro}
    Let $\Phi$ be the $L^2$ normalization of $F$,
\begin{align}
    \Phi(x)
    &=
    \frac{F}{\|F\|_{L^2}} \\
    &=
    \frac{\sqrt{3}}{\sqrt{2\pi^3}}
    \begin{cases}
    -\pi+x, & 0<x\leq \pi \\
    0, & x=0 \\
    \pi+x, & -\pi\leq x<0
    \end{cases}.
\end{align}
    Suppose $u\in C\left([0,T_{max});
    C^1\left(\mathbb{T}\right)\right)$
    is an odd solution of the inviscid Burgers equation.
    Then for all $0\leq t<T_{max}$,
    \begin{equation}
    \Big\|u(\cdot,t)-
    \left\|u^0\right\|_{L^2}\Phi\Big\|_{L^2}^2
    =
    \Big\|u^0-
    \left\|u^0\right\|_{L^2}\Phi\Big\|_{L^2}^2
    -\frac{\sqrt{3}}{\sqrt{2\pi^3}}
    \left\|u^0\right\|_{L^2}^3 t.
    \end{equation}
    Note in particular that this implies that 
    \begin{equation}
    T_{max}
    \leq 
    \left(\frac{\sqrt{2\pi^3}}{\sqrt{3}}
    \right)
    \frac{\Big\|u^0-
    \left\|u^0\right\|_{L^2}
    \Phi\Big\|_{L^2}^2}
    {\left\|u^0\right\|_{L^2}^3}.
    \end{equation}
\end{theorem}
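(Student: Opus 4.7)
The plan is to derive Theorem \ref{AttractorThmIntro} as a direct specialization of Theorem \ref{GeneralAttractorThmIntro}, choosing the parameter $r$ to be the optimal value $r_0 = \|u^0\|_{L^2}/\|F\|_{L^2}$ identified in the remark between the two theorems. With this choice we have $rF = \|u^0\|_{L^2}\,\Phi$, so the quantity $\|u(\cdot,t)-rF\|_{L^2}^2$ coincides with the left-hand side in the statement. Substituting into the identity of Theorem \ref{GeneralAttractorThmIntro} gives
\begin{equation*}
\bigl\|u(\cdot,t)-\|u^0\|_{L^2}\Phi\bigr\|_{L^2}^2
= \bigl\|u^0-\|u^0\|_{L^2}\Phi\bigr\|_{L^2}^2
- \frac{\|u^0\|_{L^2}^3}{\|F\|_{L^2}}\, t,
\end{equation*}
so the content of the proof reduces to evaluating $\|F\|_{L^2}$ explicitly.

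Next I would compute the $L^2$ norm of $F$ directly from the piecewise definition. Using that $F(x) = x - \pi$ on $(0,\pi)$ and that $F$ is odd, I would write
\begin{equation*}
\|F\|_{L^2}^2 = 2\int_0^\pi (x-\pi)^2\,\diff x = 2\cdot\frac{(x-\pi)^3}{3}\bigg|_0^\pi = \frac{2\pi^3}{3},
\end{equation*}
so $\|F\|_{L^2} = \sqrt{2\pi^3}/\sqrt{3}$ and consequently $1/\|F\|_{L^2} = \sqrt{3}/\sqrt{2\pi^3}$, which matches the normalization constant appearing in the formula for $\Phi$ and in the coefficient of $t$.

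Finally, the bound on $T_{max}$ is obtained by noting that the left-hand side $\|u(\cdot,t)-\|u^0\|_{L^2}\Phi\|_{L^2}^2$ is manifestly nonnegative, so the affine-in-$t$ right-hand side cannot become negative while the solution exists. Solving the resulting inequality for $t$ yields the claimed upper bound
\begin{equation*}
T_{max} \leq \left(\frac{\sqrt{2\pi^3}}{\sqrt{3}}\right)\frac{\bigl\|u^0-\|u^0\|_{L^2}\Phi\bigr\|_{L^2}^2}{\|u^0\|_{L^2}^3}.
\end{equation*}
There is no real obstacle in this proof—the entire content is already captured by the key identity \eqref{KeyIdIntro} and Theorem \ref{GeneralAttractorThmIntro}; the only work is the elementary computation of $\|F\|_{L^2}$ and the observation that $r_0 = \|u^0\|_{L^2}/\|F\|_{L^2}$ is precisely the value that makes $rF$ have the same $L^2$ norm as $u^0$, which is the geometric content behind picking this optimal normalization.
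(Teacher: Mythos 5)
Your proposal is correct and follows exactly the paper's own route: the paper likewise obtains Theorem \ref{AttractorThmIntro} by substituting $r=r_0=\left\|u^0\right\|_{L^2}/\|F\|_{L^2}$ into Theorem \ref{GeneralAttractorThm} and using the elementary computation $\|F\|_{L^2}^2=\tfrac{2}{3}\pi^3$. Nothing is missing.
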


\begin{remark}
    Finite-time blowup for the inviscid Burgers equation is, of course, as classical a blowup result as exists in nonlinear PDEs. It is nonetheless interesting to see that, for the odd subspace, the singular function $F$ is an attractor in the natural energy space $L^2$.
\end{remark}

\begin{remark}
    Note that while we have shown that $\left\|u^0\right\|_{L^2} \Phi$ is a finite-time attractor for any odd, smooth solution of Burgers equation, it is not guaranteed that this function is reached before blowup time, and in general it will not be. We can show that the $L^2$ distance between the solution and this function shrinks, which establishes a lower bound on the blowup time, but in general blowup will occur earlier with
    \begin{equation}
    T_{max}
    <
    \left(\frac{\sqrt{2\pi^3}}{\sqrt{3}}
    \right)
    \frac{\Big\|u^0-
    \left\|u^0\right\|_{L^2}
    \Phi\Big\|_{L^2}^2}
    {\left\|u^0\right\|_{L^2}^3},
    \end{equation}
    and in this case
    \begin{equation}
    \lim_{t\to T_{max}}u(x,t)
    \neq 
    \left\|u^0\right\|_{L^2}\Phi(x).
    \end{equation}
\end{remark}

\begin{remark}
    We also note the striking similarity between the the attractor function $F$ for solutions of the inviscid Burgers equation and the ``N-wave" which has long been observed as the long-time dynamics of the viscous Burgers equation with small viscosity. For example, see Figure 2b in \cite{Burgers1939}, which is a regularized version of $F$. This N-wave behaviour has been observed as a the longtime dynamics of the viscous Burgers equation in a wide range of contexts as long as the viscosity is sufficiently small that the dissipation can be treated as a perturbation of the nonlinearity and not vice versa. This is quite natural; the dynamics of the inviscid attractor will still be present in the viscous equation, but with the jump discontinuity smoothed out to a region with a very negative spatial derivative $\partial_x u$.
\end{remark}

The 1D Burgers equation can also be seen as a sort of simplified model for the incompressible Euler equation, because it also involves the self-advection of velocity, albeit in a simpler setting, and with no incompressibility condition. The viscous Burgers equation can be used as a simplified model of the Navier--Stokes equation, and can be expressed as
\begin{equation}
    \partial_t u-\nu\Delta u+u\partial_x u=0,
\end{equation}
where $-\Delta=-\partial_x^2$. To further understand the competing effects of the nonlinearity and dissipation, the fractal Burgers equation was introduced, involving a fractional Laplacian dissipation:
\begin{equation}
    \partial_t u +\nu(-\Delta)^\alpha u
    +u\partial_x u=0,
\end{equation}
$0<\alpha<1$.
This allows the study of the interplay between the nonlinearity and the dissipation.
We will sometimes refer to \eqref{BurgersEqn} as the inviscid Burgers equation for clarity.

The wellposedness theory and finite-time blowup for the fractal Burgers equation is well established. Kiselev, Nazarov, and Shterenberg proved that there are smooth solutions that blowup in finite-time for all $0<\alpha<\frac{1}{2}$, the supercritical case, and that there are global smooth solutions whenever $\alpha\geq \frac{1}{2}$. The proof of finite-time blowup when $\alpha<\frac{1}{2}$ involves some very detailed time-splitting arguments. Independently and around the same time, Alibaud, Droniou, and Vovelle provided finite-time blowup when $0<\alpha<\frac{1}{2}$, for solutions on the whole real line, also making use of a time splitting argument \cite{Alibaud}. Shortly after, Dong, Du, and Li \cite{DongDuLi} provided another proof of finite-time blowup in the supercritical case based on a Lyapunov function with singular weights at the origin.
Very recently, Pasqualotto and Oh proved a finite-time blowup result for certain dissipative and dispersive perturbations of Burgers equation \cite{OhPasqualotto}.
This result includes supercritical blowup for the fractal Burgers equation as a special case, as well as the Whitham and fractional KdV equations.

Taking the inner product of a solution of the fractal Burgers equation with $F$, the global attractor in the inviscid case, gives a new proof of finite-time blowup in the range $0<\alpha<\frac{1}{2}$.

\begin{theorem} \label{BurgersBlowupThmIntro}
Suppose $u\in C\left([0,T_{max}),\dot{H}^s\right), s>\frac{3}{2}-2\alpha$ is a solution of the fractal Burgers equation with $\alpha<\frac{1}{2}$ and with odd initial data satisfying
    \begin{equation}
    L_0^3>16\pi^3 C_\alpha^2
    \left\|u^0\right\|_{L^2}^2 \nu,
    \end{equation}
where
\begin{align}
    L_0&:=\int_{-\pi}^\pi u^0(x)F(x)\diff x \\
    C_\alpha&:=\sqrt{2\pi}\left(\sum_{n=1}^\infty 
    \frac{1}{n^{2(1-\alpha)}}\right)^\frac{1}{2}.
\end{align}
Then the solution of the fractal Burgers equation blows up in finite-time
\begin{equation}
T_{max}<\frac{4\pi^3}{L_0}.
\end{equation}
\end{theorem}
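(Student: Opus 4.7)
The plan is to build a linear Lyapunov functional $L(t):=\langle u(\cdot,t),F\rangle$ for the fractal Burgers flow, use the identity \eqref{KeyIdIntro} to compute its derivative, and show that the hypothesis on $L_0$ forces $L(t)$ to grow past its Cauchy--Schwarz upper bound $\|u(t)\|_{L^2}\|F\|_{L^2}$ strictly before $t=4\pi^3/L_0$, thereby ruling out smooth existence. Since $(-\Delta)^\alpha$ and the Burgers nonlinearity both preserve oddness, the solution stays odd so \eqref{KeyIdIntro} applies at every time, giving
\begin{equation*}
    L'(t)=\langle\partial_t u,F\rangle = \tfrac{1}{2}\|u(t)\|_{L^2}^2 - \nu\langle (-\Delta)^\alpha u,F\rangle.
\end{equation*}
The nonlinear contribution drives $L$ upward; the dissipation term is the only obstruction, and controlling it is the main technical task.

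To estimate the dissipation term, I would move half of the fractional Laplacian onto $F$, writing $\langle (-\Delta)^\alpha u,F\rangle = \langle (-\Delta)^{\alpha/2}u,(-\Delta)^{\alpha/2}F\rangle$. A direct Parseval computation on the sine series of $F$ gives $\|(-\Delta)^{\alpha/2}F\|_{L^2}^2 = 4\pi\sum_{n\geq 1}n^{-2(1-\alpha)} = 2C_\alpha^2$, a finite quantity precisely because $\alpha<1/2$. Pointwise--in--time Cauchy--Schwarz then yields $\nu|\langle(-\Delta)^\alpha u,F\rangle|\leq \nu\sqrt{2}\,C_\alpha\|(-\Delta)^{\alpha/2}u\|_{L^2}$, which is still not directly controllable; but integrating in time, applying Cauchy--Schwarz in time, and using the standard energy identity
\begin{equation*}
    \int_0^t\|(-\Delta)^{\alpha/2}u\|_{L^2}^2\diff s = \frac{\|u^0\|_{L^2}^2-\|u(t)\|_{L^2}^2}{2\nu}\leq \frac{\|u^0\|_{L^2}^2}{2\nu}
\end{equation*}
upgrades this to $\nu\int_0^t|\langle (-\Delta)^\alpha u,F\rangle|\diff s\leq C_\alpha\|u^0\|_{L^2}\sqrt{\nu t}$. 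Integrating $L'$ from $0$ to $t$ and using the reverse Cauchy--Schwarz lower bound $\|u\|_{L^2}^2\geq L^2/\|F\|_{L^2}^2 = 3L^2/(2\pi^3)$ (since $\|F\|_{L^2}^2=2\pi^3/3$) produces the integral inequality
\begin{equation*}
    L(t)\geq L_0+\int_0^t\frac{3L(s)^2}{4\pi^3}\diff s - C_\alpha\|u^0\|_{L^2}\sqrt{\nu t}.
\end{equation*}

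The final step is an ODE-comparison contradiction. The hypothesis $L_0^3>16\pi^3 C_\alpha^2\|u^0\|_{L^2}^2\nu$ is exactly equivalent to $C_\alpha\|u^0\|_{L^2}\sqrt{\nu\cdot 4\pi^3/L_0}<L_0/2$. Assuming for contradiction that $T_{max}\geq 4\pi^3/L_0$, the correction term is absorbed by $L_0/2$ uniformly on $[0,4\pi^3/L_0]$, leaving $L(t)\geq L_0/2+\int_0^t 3L(s)^2/(4\pi^3)\diff s$. Standard comparison with the Riccati ODE $y'=3y^2/(4\pi^3)$, $y(0)=L_0/2$ forces $L(t)\to\infty$ as $t\to 8\pi^3/(3L_0)$, a time strictly less than $4\pi^3/L_0$. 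This contradicts $L(t)\leq\|u(t)\|_{L^2}\|F\|_{L^2}\leq\|u^0\|_{L^2}\|F\|_{L^2}<\infty$, which holds for $t<T_{max}$ by Cauchy--Schwarz and energy monotonicity $\|u(t)\|_{L^2}\leq\|u^0\|_{L^2}$. Hence $T_{max}<4\pi^3/L_0$.

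The hardest part is the dissipation estimate. The regularity of $F$ --- namely $F\in\dot H^s$ iff $s<1/2$ --- is exactly what matches the supercritical range, and this is why the proof runs cleanly up to the critical exponent $\alpha=1/2$; a direct pairing of $u$ against $(-\Delta)^\alpha F$ would fail once $\alpha\geq 1/4$. The second delicate point is that $\|(-\Delta)^{\alpha/2}u\|_{L^2}$ is not pointwise controlled, so one must spend a Cauchy--Schwarz in time, exchanging one power of control for a $\sqrt{t}$ factor; this is what converts the naive quadratic condition on $L_0$ expected for the inviscid case into the cubic condition $L_0^3>16\pi^3 C_\alpha^2\|u^0\|_{L^2}^2\nu$.
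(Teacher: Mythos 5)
Your proposal is correct and follows essentially the same route as the paper: the Lyapunov functional $L(t)=\langle u,F\rangle$, the identity $\langle F,u\partial_xu\rangle=-\tfrac12\|u\|_{L^2}^2$, the bound $\|F\|_{\dot H^\alpha}^2=2C_\alpha^2$, Cauchy--Schwarz in time combined with the energy equality to get the $C_\alpha\|u^0\|_{L^2}\sqrt{\nu t}$ error, and a Riccati-type comparison. The only (cosmetic) difference is that the paper packages the last step as a standalone ODE lemma yielding an explicit lower bound $L(t)>(3/L_0-\tfrac{3}{4\pi^3}t)^{-1}$, whereas you absorb the square-root error into $L_0/2$ and argue by contradiction; the constants and the final time bound come out identically.
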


For clarity, we provide an example of an initial data where there is finite-time blowup. In fact, for all $0<\alpha<\frac{1}{2}$, blowup is sufficiently generic that taking the initial data consisting of only the lowest order sine wave guarantees blowup subject only to a sign and largeness condition.

\begin{corollary} \label{BlowupBurgersExampleCorIntro}
     Suppose $u\in C\left([0,T_{max}),
     C^\infty\left(\mathbb{T}\right)\right)$ 
     is the solution of the fractal Burgers equation with $\alpha<\frac{1}{2}$ and $\nu>0$, with initial data
    \begin{equation}
    u^0(x)=-R \sin(x),
    \end{equation}
    satisfying
    \begin{equation}
    \frac{R}{\nu}>8\pi^2\sum_{n=1}^\infty 
    \frac{1}{n^{2(1-\alpha)}}.
    \end{equation}
    Then $u(\cdot,t)$ blows up in finite-time 
    \begin{equation}
    T_{max}< \frac{2\pi^2}{R}.
    \end{equation}
\end{corollary}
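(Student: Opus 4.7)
The plan is to verify that the initial datum $u^0(x) = -R\sin(x)$ satisfies the hypotheses of Theorem \ref{BurgersBlowupThmIntro} and then to read off the blowup-time bound. Oddness and smoothness are immediate, so the Sobolev regularity required to invoke the theorem is automatic. All that remains is a direct computation of $\|u^0\|_{L^2}^2$ and $L_0 = \langle u^0, F\rangle$, a substitution into the blowup criterion, and an estimate showing that the corollary's hypothesis on $R/\nu$ is strong enough to imply the criterion in Theorem \ref{BurgersBlowupThmIntro}.

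First I would compute $\|u^0\|_{L^2}^2 = R^2 \int_{-\pi}^\pi \sin^2(x)\diff x = \pi R^2$. Then, using the Fourier expansion $F(x) = -2\sum_{n\geq 1}\sin(nx)/n$ stated in the introduction, Parseval's identity gives
\begin{equation}
L_0 = \int_{-\pi}^\pi u^0(x) F(x)\diff x = \pi \cdot (-R) \cdot \left(-\frac{2}{1}\right) = 2\pi R,
\end{equation}
since only the $n=1$ mode of $u^0$ contributes. Everything else is elementary constants: $C_\alpha^2 = 2\pi\sum_{n\geq 1} n^{-2(1-\alpha)}$.

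Next I would verify the main hypothesis $L_0^3 > 16\pi^3 C_\alpha^2 \|u^0\|_{L^2}^2\, \nu$ of Theorem \ref{BurgersBlowupThmIntro}. Plugging in the values above, this reads
\begin{equation}
8\pi^3 R^3 > 16\pi^3 \cdot 2\pi \sum_{n=1}^\infty \frac{1}{n^{2(1-\alpha)}} \cdot \pi R^2 \cdot \nu,
\end{equation}
i.e.\ $R/\nu > 4\pi^2 \sum_{n\geq 1} n^{-2(1-\alpha)}$. The corollary's assumption $R/\nu > 8\pi^2 \sum_{n\geq 1} n^{-2(1-\alpha)}$ is (strictly) stronger, so the hypothesis of Theorem \ref{BurgersBlowupThmIntro} is satisfied with room to spare.

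Finally, Theorem \ref{BurgersBlowupThmIntro} yields $T_{max} < 4\pi^3/L_0 = 4\pi^3/(2\pi R) = 2\pi^2/R$, which is exactly the bound claimed. There is no real obstacle here; the content of the corollary is entirely in the single Fourier computation $\langle -R\sin, F\rangle = 2\pi R$, which is what makes the one-mode initial data a particularly clean example: the $\sin(x)$ mode is perfectly aligned with the dominant mode of the attractor $F$, so even a modest amplitude (relative to the viscosity) triggers the general blowup criterion.
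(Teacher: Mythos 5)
Your proposal is correct and follows essentially the same route as the paper: compute $L_0=2\pi R$ and $\left\|u^0\right\|_{L^2}^2=\pi R^2$, check the hypothesis of \Cref{BurgersBlowupThmIntro}, and read off $T_{max}<4\pi^3/L_0=2\pi^2/R$. In fact your arithmetic is slightly sharper than the paper's, which records $L_0^3/\bigl(\left\|u^0\right\|_{L^2}^2\nu\bigr)=4\pi^2R/\nu$ where the correct value is $8\pi^2R/\nu$; this only means, as you note, that the stated hypothesis on $R/\nu$ is satisfied with a factor of two to spare.
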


\begin{remark}
    We can see very clearly in this result the fact that the critical threshold is for finite-time blowup on the dissipation is $\alpha<\frac{1}{2}$. If we attempted to push the result out to $\alpha=\frac{1}{2}$,
    we run into the divergent series 
    \begin{equation}
    \sum_{n=1}^\infty \frac{1}{n}=+\infty,
    \end{equation}
    Note that the quantity $\frac{R}{\nu}$, serves as an analog of Reynolds number, comparing the magnitude of the flow to the strength of the viscosity, 
    so the theorem guarantees finite-time blowup with lowest order sine wave as initial data at large Reynolds number subject only to a sign condition.
    The closer we get to $\alpha=\frac{1}{2}$, the larger the Reynolds number must be to guarantee the solution with this particular initial data to blowup in finite-time, because
    \begin{equation}
    \lim_{\alpha\to\frac{1}{2}^-}
    \sum_{n=1}^\infty 
    \frac{1}{n^{2(1-\alpha)}}
    =+\infty.
    \end{equation}
\end{remark}

\begin{remark}
    Note that when working on the whole space, the solution set the 1D fractal Burgers equation is preserved for all $\lambda>0$ by the transformation
    \begin{equation}
    u^\lambda(x,t)=\lambda^{2\alpha-1}u(\lambda x, \lambda^{2\alpha} t).
    \end{equation}
    It is a classical result that if $u$ is a smooth solution of the 1D inviscid Burgers equation, then for all $1\leq q\leq +\infty$ and for all $0<t<T_{max}$,
    \begin{equation}
    \|u(\cdot,t)\|_{L^q}=\left\|u^0\right\|_{L^q}.
    \end{equation}
    This fact can be used to find a priori bounds on all order $L^p$ norms for the fractal Burgers equation. The bound in $L^\infty$ is the strongest, and it can be seen that this bound is supercritical when $\alpha<\frac{1}{2}$, critical when $\alpha=\frac{1}{2}$, and subcritical when $\alpha>\frac{1}{2}$.
    The fact that the $L^\infty$ norm remains bounded then leads to global regularity when $\alpha\geq \frac{1}{2}$. See \cite{KiselevBurgers} for details.
\end{remark}

\begin{remark}
Note that this has a significant similarity with the Lyapunov functional used by Dong, Du, and Li in \cite{DongDuLi}. One of the major advances of \cite{DongDuLi} was that this paper does not require an oddness assumption on the initial data; however, in the odd case specifically there are some similarities with the results in this paper. For odd solutions of the fractal Burgers equation, they prove finite-time blowup when $0<\alpha<\frac{1}{2}$, so long as for some $0<\delta<1-2\alpha$
\begin{equation}
    \int_{-\pi}^\pi
    u^0(x) G_\delta(x)
    >
    C_{\alpha,\delta}
    \nu^\frac{1}{2}
    \left\|u^0\right\|_{L^\infty}^\frac{1}{2},
\end{equation}
where
\begin{equation}
    G_\delta(x)=
    \sgn(x)\left(
    \pi^{-\delta}-|x|^{-\delta}\right),
\end{equation}
and $C_{\alpha,\delta}>0$ is a constant depending only on $\alpha$ and $\delta$.
Note that the sign of $G_\delta$ has been flipped from \cite{DongDuLi}, because they use a different convention with the nonlinearity having the opposite sign.

In fact, it is even remarked in \cite{DongDuLi} that the function $F$ could also be used as a Lyapunov functional, using the methods in that paper. However, the precise identity for the inner product of the Burgers nonlinearity with the Lyapunov functional $F$ means that the proof of finite-time blowup for the fractal Burgers equation in this paper does not rely on a priori bounds on the $L^\infty$ norm, but rather uses a priori bounds from the energy equality. This may be useful in other contexts, because the energy equality for the fractal Burgers equation has a direct analogue in the hypodissipative Navier--Stokes equation, while the a priori bound on the $L^\infty$ norm does not.
\end{remark}

Finally, we will consider the dynamics of odd solutions of the fractal Burgers equation in Fourier space.
Any odd solution of the fractal Burgers equation on the torus can be expressed in the form
\begin{equation} 
        u(x,t)=-2\sum_{n=1}^\infty 
        \psi_n(t) \sin(nx).
    \end{equation}
Once the equation is expressed in this way, the dynamics can be entirely described by the following infinite system of ODEs.
    \begin{equation} \label{IntroODE} 
    \partial_t \psi_n
    =-\nu n^{2\alpha}\psi_n
    +\frac{n}{2}\sum_{j=1}^{n-1} \psi_j\psi_{n-j}
    -n\sum_{k=1}^\infty \psi_k\psi_{k+n}.
    \end{equation}
Using Parseval's identity, the Lyapunov functional can be naturally expressed in terms of the Fourier coefficients as follows:
\begin{equation} \label{IntroAltLyapunov}
L(t)=\int_{-\pi}^\pi F(x)u(x,t)\diff x
= 4\pi \sum_{n=1}^\infty \frac{\psi_n}{n},
\end{equation}
and indeed we first considered this Lyapunov functional in terms of this Fourier series expansion.
It may seem that the function $F$ is rather arbitrary, but we can see that the $\frac{1}{n}$ Fourier sine series expansion has a very natural connection to the nonlinearity in \eqref{IntroODE}, because it cancels the factor of $n$ in front of each of the nonlinear terms, and it was precisely that structure from the nonlinearity which motivated the introduction of the Lyapunov functional, which we first expressed in terms of the Fourier coefficients as in \eqref{IntroAltLyapunov}, and only later in terms of integrating against $F$.

Note that \Cref{GeneralAttractorThmIntro,BurgersBlowupThmIntro} can be generalized from the specific function $F$ to a wide range of odd functions that are differentiable and strictly increasing on $(0,\pi)$.

\begin{theorem} \label{VeryGeneralAttractorThmIntro}
    Suppose that $H\in L^\infty\left(\mathbb{T}\right)$ is odd, differentiable on $(0,\pi)$, and that 
    \begin{equation}
    m:= \inf_{0<x<\pi} H'(x)
    >0.
    \end{equation}
    Suppose that $u\in C\left([0,T_{max});
    C^1\left(\mathbb{T}\right)\right)$
    is an odd solution of Burgers equation.
    Then for all $0\leq t<T_{max}$,
    \begin{equation}
    \|u(\cdot,t)-H\|_{L^2}^2
    \leq 
    \left\|u^0-H\right\|_{L^2}^2
    -mt.
    \end{equation}
    Note that this implies the solution blows up in finite-time with
    \begin{equation}
    T_{max}\leq\frac{\left\|u^0-H\right\|_{L^2}^2}{m}.
    \end{equation}
\end{theorem}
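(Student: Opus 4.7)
The plan is to follow the strategy that yields the identity \eqref{KeyIdIntro} and \Cref{GeneralAttractorThmIntro}, replacing the exact identity specific to $F$ by a one-sided inequality that uses only the pointwise lower bound $H'\ge m$ on $(0,\pi)$. First I would differentiate $\|u-H\|_{L^2}^2$ in time and use the Burgers equation to write
\begin{equation*}
\frac{\diff}{\diff t}\|u(\cdot,t)-H\|_{L^2}^2
= -2\langle u\partial_x u, u\rangle + 2\langle u\partial_x u, H\rangle.
\end{equation*}
The first term vanishes identically because $u\partial_x u\cdot u=\tfrac{1}{3}\partial_x(u^3)$ integrates to zero on the torus; this is just the usual $L^2$ conservation for smooth solutions of Burgers, and it needs no symmetry of $u$.

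For the second term, I would rewrite $2u\partial_x u=\partial_x(u^2)$ and integrate by parts against $H$. Because $H$ is only assumed to be differentiable on $(0,\pi)$, it may have jump discontinuities at $x=0$ and at $x=\pm\pi$ (the only points where oddness and periodicity permit jumps). I would split the torus integral into $(-\pi,0)$ and $(0,\pi)$, integrate by parts on each piece, and check that every resulting boundary term has the form $u^2(x_0)\bigl[H(x_0^-)-H(x_0^+)\bigr]$ at a point $x_0\in\{0,\pm\pi\}$. All such contributions vanish, because $u$ is odd and continuous so $u(0)=0$, and oddness combined with $2\pi$-periodicity gives $u(-\pi)=u(\pi)=-u(\pi)$, forcing $u(\pm\pi)=0$. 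Thus
\begin{equation*}
2\langle u\partial_x u, H\rangle = -\int_{-\pi}^{\pi} u^2(x)\,H'(x)\,\diff x.
\end{equation*}

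Next I would exploit the two odd symmetries. Since $H$ is odd, $H'$ is even, and $u^2$ is even because $u$ is odd; combined with $H'\ge m$ on $(0,\pi)$ this yields
\begin{equation*}
\int_{-\pi}^{\pi} u^2 H'\,\diff x
= 2\int_{0}^{\pi} u^2 H'\,\diff x
\ge 2m\int_{0}^{\pi} u^2\,\diff x
= m\,\|u\|_{L^2}^2.
\end{equation*}
Hence $\tfrac{\diff}{\diff t}\|u-H\|_{L^2}^2\le -m\|u\|_{L^2}^2$. Invoking $L^2$ conservation to replace $\|u(\cdot,t)\|_{L^2}$ by $\|u^0\|_{L^2}$ and integrating in time then gives the stated decay estimate on $\|u-H\|_{L^2}^2$, and the upper bound on $T_{max}$ follows because the left-hand side must remain nonnegative.

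The main obstacle is the boundary-term bookkeeping in the integration by parts: because $H$ is permitted to be discontinuous, one has to verify that each potential jump contribution actually vanishes. This reduces to the observation that $u$ is forced by its oddness, continuity, and periodicity to vanish at every candidate jump point of $H$ on the torus, namely $0$ and $\pm\pi$. Once that is established, the remainder is the same algebraic manipulation that produces \eqref{KeyIdIntro}, with the exact value $F'=1$ on $(0,\pi)$ replaced by the one-sided bound $H'\ge m$.
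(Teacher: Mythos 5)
Your proposal is correct and follows essentially the same route as the paper: the same integration by parts of $\partial_x(u^2)$ against $H$ with the parity reduction and the boundary-term bookkeeping at $x=0,\pm\pi$ using $u(0)=u(\pm\pi)=0$ (the paper isolates this as \Cref{GeneralInnerProductLemma}), followed by the same use of $L^2$ conservation. One small remark: exactly as in the paper's own proof, the computation actually yields $\frac{\diff}{\diff t}\|u(\cdot,t)-H\|_{L^2}^2\leq -m\left\|u^0\right\|_{L^2}^2$, so the integrated bound carries the factor $\left\|u^0\right\|_{L^2}^2$ that does not appear in the theorem's displayed estimate $-mt$; this discrepancy lies in the statement as printed, not in your argument.
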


\begin{theorem}  \label{VeryGeneralBlowupThmIntro}
 Suppose that $H\in \dot{H}^\alpha\left(\mathbb{T}\right) \cap L^\infty\left(\mathbb{T}\right), 0<\alpha<\frac{1}{2}$ is odd, differentiable on $(0,\pi)$, and that 
    \begin{equation}
    m:= \inf_{0<x<\pi} H'(x)
    >0.
    \end{equation}    
    Suppose $u\in C\left([0,T_{max}),\dot{H}^s\right),  s>\frac{3}{2}-2\alpha$ is a solution of the fractal Burgers equation with fractional dissipation $\alpha$ and with odd initial data satisfying
    \begin{equation}
    \Tilde{L}_0^3
    >\frac{12}{m}
    \|H\|_{\dot{H}^\alpha}^2
    \|H\|_{L^2}^2
    \left\|u^0\right\|_{L^2}^2\nu,
    \end{equation}
    where
    \begin{equation}
    \Tilde{L}_0
    =
    \int_{-\pi}^\pi
    H(x) u^0(x)\diff x.
    \end{equation}
Then the solution of the fractal Burgers equation blows up in finite-time
\begin{equation}
T_{max}<\frac{6\|H\|_{L^2}^2}{m\Tilde{L}_0}.
\end{equation}
\end{theorem}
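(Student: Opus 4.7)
The plan is to mimic the Lyapunov-functional strategy of Theorem \ref{BurgersBlowupThmIntro}, with the functional $\tilde{L}(t)=\int_{-\pi}^\pi H(x)u(x,t)\,dx$ replacing $\int F u$. The first step is to prove a generalized version of the identity \eqref{KeyIdIntro}: for any odd $u\in C^1(\mathbb{T})$ and any $H$ as in the hypothesis, integration by parts split across the (potential) jump of $H$ at $0$ gives
\begin{equation*}
\int_{-\pi}^\pi u\,\partial_x u\,H\,dx = -\frac{1}{2}\int_{-\pi}^\pi u^2 H'\,dx \leq -\frac{m}{2}\|u\|_{L^2}^2,
\end{equation*}
because (i) $u$ odd, periodic, and continuous forces $u(0)=u(\pm\pi)=0$ so all boundary terms vanish, and (ii) $H$ odd implies $H'$ is even with $H'\geq m$ on $(-\pi,0)\cup(0,\pi)$. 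This is the clean analog of the identity used for $F$ and is the only place the structural hypotheses on $H$ enter.

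Next, I would differentiate $\tilde{L}$ using the fractal Burgers equation, handle the dissipation term by the self-adjoint duality $\langle(-\Delta)^\alpha u,H\rangle=\langle(-\Delta)^{\alpha/2}u,(-\Delta)^{\alpha/2}H\rangle$ followed by Cauchy--Schwarz, and insert the bound above along with $\tilde{L}(t)\leq\|u\|_{L^2}\|H\|_{L^2}$. This yields the fundamental differential inequality
\begin{equation*}
\tilde{L}'(t) \geq \frac{m}{2\|H\|_{L^2}^2}\,\tilde{L}(t)^2 - \nu\,\|H\|_{\dot{H}^\alpha}\,\|u(t)\|_{\dot{H}^\alpha}.
\end{equation*}
The energy identity for the fractal Burgers equation (the cubic nonlinearity drops out by periodicity) provides $\int_0^t\|u\|_{\dot{H}^\alpha}^2\,ds\leq\|u^0\|_{L^2}^2/(2\nu)$, so Cauchy--Schwarz in time bounds the time-integrated dissipation term by $\sqrt{\nu/2}\,\|H\|_{\dot{H}^\alpha}\,\|u^0\|_{L^2}\sqrt{t}$.

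The final step is a bootstrap plus Riccati argument. Writing $A=m/(2\|H\|_{L^2}^2)$ and $K=\nu\|H\|_{\dot{H}^\alpha}^2\|u^0\|_{L^2}^2/2$, the integrated inequality $\tilde{L}(t)\geq \tilde{L}_0+A\int_0^t\tilde{L}^2\,ds-\sqrt{Kt}$ yields $\tilde{L}(t)\geq\tilde{L}_0/2$ as long as the maximum of $\sqrt{Kt}-A\tilde{L}_0^2 t/4$, computed as $K/(A\tilde{L}_0^2)$, stays below $\tilde{L}_0/2$, which is guaranteed by the hypothesis $\tilde{L}_0^3>12K/A$. With this lower bound in hand, dividing the differential inequality by $\tilde{L}^2$ and integrating gives
\begin{equation*}
\frac{1}{\tilde{L}(t)}\leq\frac{1}{\tilde{L}_0}-At+\frac{4}{\tilde{L}_0^2}\sqrt{Kt},
\end{equation*}
and substituting $t=3/(A\tilde{L}_0)=6\|H\|_{L^2}^2/(m\tilde{L}_0)$ shows the right-hand side is nonpositive precisely when $\tilde{L}_0^3\geq 12K/A$, forcing $\tilde{L}(t)\to\infty$ before this time and hence $T_{\max}<6\|H\|_{L^2}^2/(m\tilde{L}_0)$. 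The main obstacle is ensuring the two quantitative thresholds (the bootstrap threshold and the Riccati blowup threshold) produce the same numerical constant $12$; I expect this falls out cleanly because the delicate maximization $\sqrt{Kt}-A\tilde{L}_0^2 t/4$ and the Riccati estimate both scale the same way in $K/A$.
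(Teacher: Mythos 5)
Your proposal is correct and follows essentially the same route as the paper: the same Lyapunov functional $\tilde L(t)=\langle H,u(\cdot,t)\rangle$, the same integration-by-parts inequality $-\langle H,\tfrac12\partial_x u^2\rangle\geq\tfrac{m}{2}\|u\|_{L^2}^2$ (the paper's \Cref{GeneralInnerProductLemma}), the same Cauchy--Schwarz-plus-energy-equality bound $\int_0^t f\leq M t^{1/2}$, and the same Riccati comparison with $\kappa=m/(2\|H\|_{L^2}^2)$. The only immaterial difference is in the ODE step: you propagate $\tilde L\geq\tilde L_0/2$ by a bootstrap using the quadratic term, whereas the paper's \Cref{ODElemma} gets the same lower bound directly from $\tilde L'\geq -f$; both routes produce the threshold constant $12$ and the bound $T_{max}<6\|H\|_{L^2}^2/(m\tilde L_0)$.
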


\begin{remark}
    Note that while \Cref{VeryGeneralBlowupThmIntro} shows that blowup is highly generic for the fractal Burgers equation with $\alpha<\frac{1}{2}$ when the viscosity is sufficiently small compared with the initial data, we can also see why $\alpha<\frac{1}{2}$ is the critical threshold. Note that the requirement that $H$ is odd and strictly increasing on $(0,\pi)$ automatically implies that $H$ is also strictly increasing on $(-\pi,0)$. Because $H$ is $2\pi$-periodic, this means that there must be a jump discontinuity (with the left hand limit greater than the right hand limit) at either $x=0$ or at $x=\pm \pi$. 
    This immediately implies that $H\notin \dot{H}^s\left(\mathbb{T}\right)$ for all $s>\frac{1}{2}$, as Sobolev embedding would then guarantee continuity. We require $H\in \dot{H}^\alpha$ for the proof, and so the argument clearly cannot be generalized to $\alpha>\frac{1}{2}$. It also cannot be generalized to $\alpha=\frac{1}{2}$, but this case is more subtle, as this is the exact endpoint of Sobolev embedding, and relies on the fact that there is a $\delta$ distribution in the distributional derivative of $H$ which just fails to be in $\dot{H}^\frac{1}{2}$.
    
    In the case of $F$, we have a jump discontinuity at $x=0$. For an example of the later case, consider the function $2\pi$ periodic function given on $[-\pi,\pi]$ by  
    \begin{equation}
    H(x)=\begin{cases}
        x, & -\pi<x<\pi \\
        0, & x=\pm\pi.
    \end{cases}
    \end{equation}
    It is straightforward to check that $H$ satisfies the hypotheses of \Cref{VeryGeneralAttractorThmIntro,VeryGeneralBlowupThmIntro}, and in fact that $H$ is just a translate of $F$ with $H(x)=F(x+\pi)$.
\end{remark}

\subsection{Outline and structure}

We begin this paper, in \Cref{AttractorSection}, by showing that positive multiples of $F$ are $L^2$ attractors for the inviscid Burgers equation, proving \eqref{KeyIdIntro} and \Cref{GeneralAttractorThmIntro,AttractorThmIntro}. 
Then in \Cref{BlowupSection}, we will use the key identity from \eqref{KeyIdIntro} to prove finite-time blowup when $0<\alpha<\frac{1}{2}$, proving \Cref{BurgersBlowupThmIntro} and \Cref{BlowupBurgersExampleCorIntro}.
In \Cref{FourierSection}, we will consider the dynamics of odd solutions of the Burgers equation in Fourier space.
Finally in \Cref{VeryGeneralSection}, we will consider a general class of functions that are $L^2$ attractors for the inviscid Burgers equation, of which $F$ is only one example, and show that these functions can also be used as Lyapunov functions to prove finite-time blowup for the fractal Burgers equation, proving \Cref{VeryGeneralAttractorThmIntro,VeryGeneralBlowupThmIntro}.
We structure the paper in this way because the $L^2$ attractor result is the most novel aspect of the paper, but we also note that this structure is exactly reversed from the order in which the results were discovered.

All of the results in this paper were first discovered on the Fourier space side, and only later were expressed in terms of $F$. While \Cref{FourierSection} is not strictly required---the results in this section have already been expressed in physical space in \Cref{AttractorSection,BlowupSection}---it is nonetheless included as motivation for considering how the the function $F$ interacts with solutions of Burgers equation. Without it, $F$ would appear to drop from the sky rather arbitrarily.

\subsection{Definitions and preliminaries}

Kiselev, Nazarov, and Shterenberg proved the following local wellposedness result in \cite{KiselevBurgers} that will provide the setting for our study of smooth solutions of the fractal Burgers equation.
\begin{theorem} \label{KiselevThm}
    Suppose $0<\alpha\leq \frac{1}{2}$, 
    and suppose $u^0\in \dot{H}^s\left(\mathbb{T}\right), s>\frac{3}{2}-2\alpha$.
    Then there exists some positive time $T=T(\alpha,s,\|u\|_{\dot{H}^s})>0$, where there is a unique solution
    $u\in C\left([0,T];\dot{H}^s\right)\cap L^2\left([0,T];\dot{H}^{s+\alpha}\right)$.
    Note that the uniqueness is among solutions with this regularity, not necessarily among all possible weak solutions.
    Furthermore, we have higher regularity for all positive times, $u\in C^\infty\left((0,T],\mathbb{T}\right)$.
    We also have the energy equality
    \begin{equation} \label{EnergyEquality}
    \frac{1}{2}\|u(\cdot,t)\|_{L^2}^2
    +\nu\int_0^t\|u(\cdot,\tau)\|_{\dot{H}^\alpha}^2
    =\frac{1}{2}\left\|u^0\right\|_{L^2}^2.
    \end{equation}
\end{theorem}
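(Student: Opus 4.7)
The plan is to combine a Galerkin approximation with a commutator-based energy estimate in $\dot{H}^s$, pass to the limit by compactness, and bootstrap to smoothness. Fix $s > \frac{3}{2}-2\alpha$ and let $u^N = P_N u$, where $P_N$ is the Dirichlet projection onto Fourier modes with $|k|\le N$. The projected equation is a locally Lipschitz ODE on a finite-dimensional subspace of $L^2(\mathbb{T})$, so it admits a short-time solution $u^N$ for each $N$, automatically satisfying the $L^2$ energy equality of the projected system.

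The heart of the argument is a closed a priori estimate in $\dot{H}^s$ uniform in $N$. Applying $\Lambda^s := (-\partial_x^2)^{s/2}$ to the equation and pairing with $\Lambda^s u^N$ gives
\begin{equation*}
    \tfrac{1}{2}\tfrac{d}{dt}\|u^N\|_{\dot{H}^s}^2 + \nu\|u^N\|_{\dot{H}^{s+\alpha}}^2 = -\langle \Lambda^s(u^N\partial_x u^N),\,\Lambda^s u^N\rangle.
\end{equation*}
I would split the nonlinearity into a transport piece $\langle u^N \partial_x \Lambda^s u^N,\Lambda^s u^N\rangle = -\tfrac{1}{2}\int \partial_x u^N (\Lambda^s u^N)^2$ and the commutator $[\Lambda^s,u^N]\partial_x u^N$, bounded in $L^2$ by $C\|\partial_x u^N\|_{L^\infty}\|u^N\|_{\dot H^s}$ via Kato--Ponce. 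The total is controlled by $C\|\partial_x u^N\|_{L^\infty}\|u^N\|_{\dot H^s}^2$. Gagliardo--Nirenberg interpolation of $\|\partial_x u^N\|_{L^\infty}$ between $\|u^N\|_{L^2}$ (bounded by the Galerkin analog of \eqref{EnergyEquality}) and $\|u^N\|_{\dot H^{s+\alpha}}$, followed by Young's inequality, absorbs a factor $\|u^N\|_{\dot H^{s+\alpha}}^2$ into the dissipation precisely when $s>\tfrac{3}{2}-2\alpha$. What remains is $\tfrac{d}{dt}\|u^N\|_{\dot H^s}^2 \le C_\nu(1+\|u^N\|_{\dot H^s})^p$, yielding an $N$-uniform existence time $T$ and uniform bounds in $L^\infty_t \dot H^s \cap L^2_t \dot H^{s+\alpha}$.

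For convergence and uniqueness, the uniform bounds plus an estimate on $\partial_t u^N$ in a negative-order space from the equation yield Aubin--Lions compactness of $(u^N)$ in $L^2_t \dot{H}^{s'}$ for every $s'<s$, enough to pass to the limit in the quadratic term. Uniqueness in the stated class is a standard $L^2$ estimate on the difference $w = u_1 - u_2$ with the same data: integration by parts gives $\tfrac{d}{dt}\|w\|_{L^2}^2 + 2\nu\|w\|_{\dot H^\alpha}^2 \le \|\partial_x u_1\|_{L^\infty}\|w\|_{L^2}^2$, and since $s+\alpha > \tfrac{1}{2}$ the factor $\|\partial_x u_1\|_{L^\infty}$ is integrable in time after the same Gagliardo--Nirenberg interpolation, so Gronwall closes the estimate.

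For smoothing and the energy equality, note that $u\in L^2_t\dot{H}^{s+\alpha}$ forces the existence of some $t_0\in(0,T)$ with $u(\cdot,t_0)\in\dot{H}^{s+\alpha}$; restarting the existence argument at $t_0$ with $s$ replaced by $s+\alpha$ and iterating delivers $u\in C^\infty$ on $(0,T]\times\mathbb{T}$. The energy equality \eqref{EnergyEquality} is then obtained by pairing the equation with $u$, justified because $u\in L^2_t\dot H^\alpha$ and $\partial_t u\in L^2_t \dot H^{-\alpha}$ give $\tfrac{d}{dt}\tfrac{1}{2}\|u\|_{L^2}^2 = \langle \partial_t u, u\rangle$ in the distributional sense, combined with $\int u^2\partial_x u = \tfrac{1}{3}\int \partial_x(u^3) = 0$ (legal by density from the smoothness on $(t_0,T]$ and a limiting argument at $t=0$). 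The main technical obstacle is achieving the sharp threshold $s > \tfrac{3}{2}-2\alpha$: any crude interpolation of $\|\partial_x u\|_{L^\infty}$ against only the top-order norm forces the worse condition $s > \tfrac{3}{2}-\alpha$, and one must essentially use the conservation (monotonicity) of $\|u\|_{L^2}$ as a third endpoint in the interpolation to save the extra $\alpha$ derivatives.
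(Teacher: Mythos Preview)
The paper does not prove this theorem. \Cref{KiselevThm} is quoted from Kiselev--Nazarov--Shterenberg \cite{KiselevBurgers}, and the only related content here is the remark immediately following it, which summarizes their argument in one sentence: a mollified approximation with $\dot H^s$ bounds uniform in the mollification (coming from the transport structure of the nonlinearity), passage to the limit, and uniqueness via stability in $\dot H^{s-1}$. There is therefore no proof in this paper for your attempt to be compared against; your Galerkin-plus-commutator scheme is a reasonable variant of what the remark describes, with Galerkin in place of mollification and $L^2$ stability in place of $\dot H^{s-1}$.

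One substantive gap is worth flagging: the mechanism you propose for reaching the sharp threshold $s>\tfrac32-2\alpha$ does not close as written. Gagliardo--Nirenberg interpolation of $\|\partial_x u\|_{L^\infty}$ between $\|u\|_{L^2}$ and $\|u\|_{\dot H^{s+\alpha}}$ already presupposes $s+\alpha>\tfrac32$, i.e.\ $s>\tfrac32-\alpha$; inserting the $L^2$ endpoint into a two-point interpolation cannot lower that requirement, so it does not recover the extra $\alpha$ derivatives you claim. Getting down to the scaling-critical-plus-$\epsilon$ regime genuinely requires a trilinear estimate that distributes the $\dot H^{s+\alpha}$ gain from dissipation across more than one factor (for instance via a Kato--Ponce inequality in $L^p$ with $p\neq 2$ combined with Sobolev embedding, or by shifting $\alpha$ derivatives off the test function before applying the product rule), rather than funneling everything through $\|\partial_x u\|_{L^\infty}$. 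Your bootstrap to smoothness and the justification of the energy equality are standard and fine.
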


\begin{remark}
    Note that this solution is a weak solution in the sense that it satisfies,
    \begin{equation}
    \int_0^T\int_{-\pi}^\pi 
    \left(-u \partial_t w -\frac{1}{2}u^2\partial_x w
    +u (-\Delta)^{\alpha}w\right)(x,t)
    \diff x \diff t,
    \end{equation}
    for all test functions $w\in C_c^\infty\left(
    [0,T]\times\mathbb{T}\right)$,
    but that higher regularity guarantees that $u$ is also a classical solution for all positive times up until the blowup time. Note that because $\alpha\leq \frac{1}{2}$, we are in the quasilinear setting, and so the a solution cannot be obtained by the Banach fixed point argument using the fractional heat semigroup. Nonetheless---while the solution is obtained as the weak limit of a mollified problem---as in the Euler equation, there are bounds on the growth of the Sobolev norms that are uniform in the mollification due to the structure of the nonlinearity.
    This is enough to guarantee uniqueness because of stability in $\dot{H}^{s-1}$;
    see \cite{KiselevBurgers} for more details. 
\end{remark}

We will also need to consider solutions of the inviscid Burgers equation \eqref{BurgersEqn} on the torus. It is very classical that smooth solutions exist locally-in-time for any data in $C^1\left(\mathbb{T}\right)$, but see in particular \cites{Burgers1954a,Burgers1954b} for the following result.

\begin{theorem}
    Suppose $u^0\in C^1\left(\mathbb{T}\right)$. Then there exists a unique, local strong solution of Burgers equation 
    $u\in C\left([0,T_{max});
    C^1\left(\mathbb{T}\right)\right)$.
    If there exists $x\in\mathbb{T}$ such that $\partial_x u^0(x)<0$, then 
    \begin{equation}
    T_{max}=\frac{1}
    {-\min_{x\in\mathbb{T}}\partial_x u^0(x)}
    <+\infty;
    \end{equation}
    and if for all $x\in\mathbb{T}, 
    \partial_x u^0(x)\geq 0$,
    then $T_{max}=+\infty$.
    Note that for solutions on the torus, this implies that $T_{max}<+\infty$ for an solution with non-constant initial data.
    Furthermore, for all $1\leq q\leq +\infty$ and for all $0\leq t<T_{max}$,
    \begin{equation}
    \|u(\cdot,t)\|_{L^q}
    =
    \left\|u^0\right\|_{L^q}.
    \end{equation}
    Note that the case $q=2$ corresponds to the energy equality for the incompressible Euler equation.
\end{theorem}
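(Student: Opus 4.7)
The plan is to prove this classical result via the method of characteristics. First I would introduce the characteristic ODE $\dot{X}(t,x_0)=u(X(t,x_0),t)$ with $X(0,x_0)=x_0$, and observe that along any $C^1$ solution $u$ of $\partial_t u+u\partial_x u=0$, the chain rule gives
\begin{equation*}
\frac{\diff}{\diff t}u(X(t,x_0),t)=\partial_t u+u\partial_x u=0,
\end{equation*}
so $u(X(t,x_0),t)=u^0(x_0)$. Substituting this back into the ODE yields $\dot{X}=u^0(x_0)$, so the characteristics are the straight lines $X(t,x_0)=x_0+u^0(x_0)t$. This will be the candidate formula for the solution via implicit definition $u(X(t,x_0),t)=u^0(x_0)$.

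Next I would turn this into an actual construction by studying the map $\Phi_t(x_0):=x_0+u^0(x_0)t$. Viewed as a $C^1$ map on $\mathbb{R}$ it satisfies $\Phi_t(x_0+2\pi)=\Phi_t(x_0)+2\pi$ and hence descends to a $C^1$ self-map of $\mathbb{T}$, with derivative $\Phi_t'(x_0)=1+t\,\partial_x u^0(x_0)$. Let $m:=\min_{x\in\mathbb{T}}\partial_x u^0(x)$, which is attained since $\partial_x u^0$ is continuous on the compact torus. For $t<T_{max}:=1/(-m)$ (or any $t\geq 0$ if $m\geq 0$) we have $\Phi_t'>0$ uniformly, so $\Phi_t$ is a $C^1$ diffeomorphism of $\mathbb{T}$. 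Defining $u(x,t):=u^0(\Phi_t^{-1}(x))$ and differentiating via the inverse function theorem gives
\begin{equation*}
\partial_x u(x,t)=\frac{\partial_x u^0(x_0)}{1+t\,\partial_x u^0(x_0)},\qquad x_0=\Phi_t^{-1}(x),
\end{equation*}
which is continuous in $(x,t)$ on $\mathbb{T}\times[0,T_{max})$, and a direct computation shows $\partial_t u=-u\partial_x u$, so $u\in C\left([0,T_{max});C^1(\mathbb{T})\right)$ solves Burgers. If $m<0$, then as $t\uparrow T_{max}$ the denominator $1+t\,\partial_x u^0(x_0^*)$ vanishes at a minimizer $x_0^*$, so $\|\partial_x u(\cdot,t)\|_{L^\infty}\to\infty$ and no $C^1$ extension past $T_{max}$ is possible, yielding the exact blowup time. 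If $m\geq 0$, then periodicity forces $\int_{\mathbb{T}}\partial_x u^0=0$, hence $\partial_x u^0\equiv 0$, so $u^0$ is constant and the solution exists for all time (and is stationary), giving $T_{max}=+\infty$ as stated.

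For uniqueness, I would suppose $v$ is another $C^1$ solution with the same initial data and integrate its characteristic ODE $\dot{Y}(t)=v(Y(t),t)$; standard ODE theory (Picard–Lindelöf, applicable because $v$ is $C^1$ in $x$ and continuous in $t$) gives a unique flow, and the same constancy argument as above forces $v(Y(t,x_0),t)=u^0(x_0)=u(Y(t,x_0),t)$ for all $x_0$, with $Y(t,x_0)=x_0+u^0(x_0)t=\Phi_t(x_0)$; since $\Phi_t$ is surjective on $\mathbb{T}$, $v=u$. For the $L^q$ conservation, I change variables $x=\Phi_t(x_0)$ using the positive Jacobian $\Phi_t'(x_0)=1+t\,\partial_x u^0(x_0)$:
\begin{equation*}
\|u(\cdot,t)\|_{L^q}^q=\int_{-\pi}^\pi \bigl|u^0(x_0)\bigr|^q\bigl(1+t\,\partial_x u^0(x_0)\bigr)\diff x_0=\left\|u^0\right\|_{L^q}^q+t\int_{-\pi}^\pi \partial_{x_0}G(u^0(x_0))\diff x_0,
\end{equation*}
where $G(s)=\sgn(s)|s|^{q+1}/(q+1)$ is an antiderivative of $|s|^q$. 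The last integral vanishes by $2\pi$-periodicity of $u^0$, giving $\|u(\cdot,t)\|_{L^q}=\|u^0\|_{L^q}$ for $1\leq q<\infty$. For $q=\infty$, since $\Phi_t^{-1}$ is a bijection of $\mathbb{T}$, the essential ranges of $u(\cdot,t)$ and $u^0$ coincide, giving equality of $L^\infty$ norms.

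The main obstacle is not any single computation but rather the passage from the pointwise characteristic construction to a genuine $C^1$ solution globally on the torus: one must verify both that $\Phi_t$ is a $C^1$ diffeomorphism of $\mathbb{T}$ (not merely a local diffeomorphism on $\mathbb{R}$) for $t<T_{max}$, which uses positivity of $\Phi_t'$ together with the lift condition $\Phi_t(x_0+2\pi)=\Phi_t(x_0)+2\pi$, and that the blowup time is sharp, which requires showing that $\partial_x u$ actually diverges at the minimizer of $\partial_x u^0$ rather than the solution merely losing $C^1$ regularity elsewhere. Both points are handled by compactness of $\mathbb{T}$ and continuity of $\partial_x u^0$, which guarantee that $m$ is attained and that $1+t\partial_x u^0$ is bounded away from $0$ on $[0,T]$ for any $T<T_{max}$.
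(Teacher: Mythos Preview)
Your proof is correct and follows the standard method-of-characteristics approach. The paper does not actually prove this theorem: it is stated in the preliminaries as a classical result, with a reference to Burgers' original papers \cites{Burgers1954a,Burgers1954b}, and no proof is given. So there is nothing in the paper to compare your argument against; your construction of the flow map $\Phi_t(x_0)=x_0+u^0(x_0)t$, the identification of $T_{max}$ from the vanishing of $\Phi_t'$, the uniqueness via characteristics, and the $L^q$ conservation via change of variables plus periodicity are exactly the classical arguments one would find in those references.
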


We will consider $2\pi$ periodic functions $u:\mathbb{T}\to \mathbb{R}$ satisfying $u(x)=u(x+2\pi)$, and we will represent the 1D torus by $\mathbb{T}=[-\pi,\pi]$.
We will define a number of function spaces of periodic functions that will be essential to our analysis.
\begin{definition}
    For all $u\in L^2$, we take the Fourier Transform to be
    \begin{equation}
    \hat{u}(k)=
    \frac{1}{2\pi} \int_{-\pi}^\pi
    u(y)e^{-iky}\diff y,
    \end{equation}
    which gives the $L^2$ convergent Fourier series
    \begin{equation}
    u(x)=\sum_{k\in\mathbb{Z}}\hat{u}(k)e^{ikx}.
    \end{equation}
    For all $s\geq 0$, define the homogeneous Sobolev norm by
    \begin{equation}
    \|u\|_{\dot{H}^s}^2
    =
    2\pi \sum_{k\in\mathbb{Z}}
    k^{2s}|\hat{u}(k)|^2,
    \end{equation}
    and we will define the space $\dot{H}^s\left(\mathbb{T}\right)$ by
    \begin{equation}
    \dot{H}^s\left(\mathbb{T}\right)
    =
    \left\{u\in L^2
    \left(\mathbb{T}\right):
    \hat{u}(0)=0, \|u\|_{\dot{H}^s}<+\infty\right\}.
    \end{equation}
    \end{definition}
    
    Note that the condition $\hat{u}(0)=0$ is the mean-free condition
    \begin{equation}
    \int_{-\pi}^\pi u(x)\diff x=0.
    \end{equation}
    Taking the standard $L^q$ norm
    \begin{equation}
    \|u\|_{L^q}
    =\left(\int_{-\pi}^\pi
    |u(x)|^q \diff x\right)^\frac{1}{q},
    \end{equation}
    we can see from the Parseval's identity that 
    for all $u\in \dot{H}^s$,
    \begin{equation}
    \|u\|_{\dot{H}^0}=\|u\|_{L^2}.
    \end{equation}

    We will define the fractional Laplacian 
    $(-\Delta)^\alpha: \dot{H}^{2\alpha} \to L^2$, as a Fourier multiplier with
    \begin{equation}
    \widehat{(-\Delta)^\alpha u}(k)
    =
    k^{2\alpha}\hat{u}(k),
    \end{equation}
    and in particular when $\alpha=1$, we have $-\Delta=-\partial_x^2$.

\section{An attractor for the inviscid Burgers equation} \label{AttractorSection}

In this section, we will prove that positive multiples of $F$ are finite-time global attractors for odd solutions of the inviscid Burgers equation.
Recall $F\in L^2\left(\mathbb{T}\right)$ be the $2\pi$-periodic function defined 
on the interval $[-\pi,\pi]$ by
\begin{equation}
    F(x)
    =
    \begin{cases}
    -\pi+x, & 0<x\leq \pi \\
    0, & x=0 \\
    \pi+x, & -\pi\leq x<0.
    \end{cases}
\end{equation}
Note that $F(-\pi)=F(\pi)$ so the only discontinuity on $[-\pi,\pi]$ is at $x=0$, and more generally at $x\in 2\pi \mathbb{N}$.
We begin with our key identity.

\begin{lemma} \label{InnerProductLemma}
    Suppose $u\in \dot{H}^1\left(\mathbb{T}\right)$ is odd. Then 
    \begin{equation}
    \left<F,u\partial_x u\right>
    =
    -\frac{1}{2}\|u\|_{L^2}^2.
    \end{equation}
\end{lemma}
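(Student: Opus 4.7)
The plan is to exploit the divergence form $u\partial_x u = \tfrac{1}{2}\partial_x(u^2)$ and integrate by parts against $F$, carefully handling the jump discontinuity of $F$ at $x=0$.

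First I would write
\begin{equation*}
\langle F, u\partial_x u\rangle = \frac{1}{2}\int_{-\pi}^{\pi} F(x)\,\partial_x(u^2)(x)\,dx.
\end{equation*}
Since $F$ is smooth on each of $(-\pi,0)$ and $(0,\pi)$ but jumps at $x=0$, I would split this integral at the origin. On each piece $F'(x)=1$, so integration by parts on each half-interval gives
\begin{equation*}
\int_0^{\pi} (x-\pi)\,\partial_x(u^2)\,dx = \bigl[(x-\pi)u^2(x)\bigr]_{0^+}^{\pi} - \int_0^\pi u^2\,dx,
\end{equation*}
and analogously on $(-\pi,0)$ with $F(x)=x+\pi$.

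Next I would show that all boundary terms vanish. At $x=\pm\pi$, the factor $(x\mp\pi)$ is zero, so those contributions kill themselves. At $x=0^{\pm}$ the surviving factor is $\mp\pi\, u^2(0)$, which vanishes because $u\in\dot{H}^1(\mathbb{T})\hookrightarrow C^0(\mathbb{T})$ is continuous and odd, so $u(0)=0$. This is precisely where the oddness hypothesis enters the calculation: without it the jump of $F$ at the origin would contribute a nonzero boundary term $-\pi\,[u^2](0)$ that would spoil the identity.

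Summing the two half-interval integrations by parts then yields
\begin{equation*}
\int_{-\pi}^{\pi} F\,\partial_x(u^2)\,dx = -\int_{-\pi}^{\pi} u^2\,dx = -\|u\|_{L^2}^2,
\end{equation*}
and the $\tfrac{1}{2}$ in front finishes the proof. The only ``obstacle'' is really a bookkeeping one, namely tracking the one-sided boundary contributions at the jump of $F$; once the splitting is set up and $u(0)=0$ is invoked, the identity falls out of the fact that $F'\equiv 1$ on the complement of the jump set.
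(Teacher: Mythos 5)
Your proof is correct and follows essentially the same route as the paper: write the nonlinearity in divergence form, integrate by parts on the intervals where $F$ is smooth and $F'\equiv 1$, and kill the boundary contributions at the jump using $u(0)=0$, which follows from oddness and the continuity of $u\in\dot{H}^1(\mathbb{T})$. The paper merely streamlines the bookkeeping by first noting that $Fu\partial_x u$ is even and reducing to a single integration by parts on $(0,\pi)$; this is a cosmetic difference, and your minor sign slip in the one-sided boundary terms at $0^{\pm}$ (both contributions are $+\pi u^2(0)$) does not affect the conclusion since they vanish anyway.
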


\begin{proof}
    First note that by H\"older's inequality,
    \begin{equation}
    \left|\left<F,u\partial_x u\right>\right|
    \leq 
    \|F\|_{L^\infty}
    \|\partial_x u\|_{L^2}
    \|u\|_{L^2}
    <
    +\infty,
    \end{equation}
    so the inner product is well defined.
    By hypothesis $u$ and $F$ are odd, and therefore $\partial_x u$ is even, $u \partial_x u$ is odd, and consequently $F u\partial_x u$ is even.
    This implies that
    \begin{align}
    \left<F,u\partial_x u\right>
    &=
    2\int_0^\pi F(x)
    u(x)\partial_xu(x)\diff x \\
    &=
    \int_0^\pi (x-\pi)\partial_x
    u^2(x)\diff x \\
    &=
    -\int_0^\pi u^2(x)\diff x \\
    &=
    -\frac{1}{2}\|u\|_{L^2}^2,
    \end{align}
    where the boundary terms vanish because
    $u(0)=0$ and $u(\pi)=0$ because of oddness (and $2\pi$ periodicity in the latter case).
\end{proof}

We can see that \Cref{InnerProductLemma} gives an immediate proof of \Cref{GeneralAttractorThmIntro}, which is restated for the reader's convenience.

\begin{theorem} \label{GeneralAttractorThm}
    Suppose $u\in C\left([0,T_{max});
    C^1\left(\mathbb{T}\right)\right)$
    is an odd solution of the inviscid Burgers equation
    and that $r>0$.
    Then for all $0\leq t<T_{max}$,
    \begin{equation}
    \|u(\cdot,t)-rF\|_{L^2}^2
    =
    \left\|u^0-rF\right\|_{L^2}^2
    -r\left\|u^0\right\|_{L^2}^2 t.
    \end{equation}
    Note in particular that this implies that
    \begin{equation}
    T_{max}
    \leq 
    \frac{\left\|u^0-rF\right\|_{L^2}^2}
    {r\left\|u^0\right\|_{L^2}^2}
    \end{equation}
\end{theorem}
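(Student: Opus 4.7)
The plan is to differentiate $\|u(\cdot,t)-rF\|_{L^2}^2$ in time and reduce everything to the key identity from \Cref{InnerProductLemma}. First, I would expand the square:
\begin{equation}
\|u(\cdot,t)-rF\|_{L^2}^2
= \|u(\cdot,t)\|_{L^2}^2
- 2r\langle u(\cdot,t),F\rangle
+ r^2\|F\|_{L^2}^2,
\end{equation}
so that only the first two terms depend on $t$. The constant term $r^2\|F\|_{L^2}^2$ drops out upon differentiation.

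Next, I would handle each surviving term separately using the Burgers equation $\partial_t u = -u\partial_x u$. For the $L^2$ norm, a standard integration by parts (valid since $u\in C^1(\mathbb{T})$ is periodic) gives
\begin{equation}
\frac{\diff}{\diff t}\|u\|_{L^2}^2
= 2\langle u,\partial_t u\rangle
= -2\langle u, u\partial_x u\rangle
= -\tfrac{2}{3}\int_{-\pi}^{\pi}\partial_x(u^3)\,\diff x
= 0,
\end{equation}
which is just the classical energy conservation for smooth solutions of inviscid Burgers; consequently $\|u(\cdot,t)\|_{L^2} = \|u^0\|_{L^2}$ for all $t<T_{max}$. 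For the cross term, since $u(\cdot,t)\in C^1(\mathbb{T})\subset \dot H^1(\mathbb{T})$ is odd by hypothesis, I can invoke \Cref{InnerProductLemma} directly:
\begin{equation}
\frac{\diff}{\diff t}\langle u,F\rangle
= \langle \partial_t u, F\rangle
= -\langle u\partial_x u, F\rangle
= \tfrac{1}{2}\|u\|_{L^2}^2
= \tfrac{1}{2}\|u^0\|_{L^2}^2.
\end{equation}

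Combining the two computations yields
\begin{equation}
\frac{\diff}{\diff t}\|u(\cdot,t)-rF\|_{L^2}^2
= -2r\cdot\tfrac{1}{2}\|u^0\|_{L^2}^2
= -r\|u^0\|_{L^2}^2,
\end{equation}
a constant in time, so integration from $0$ to $t$ gives the claimed identity. The finite-time blowup bound on $T_{max}$ then follows because the left-hand side is nonnegative, forcing
\begin{equation}
t \leq \frac{\|u^0-rF\|_{L^2}^2}{r\|u^0\|_{L^2}^2}
\end{equation}
for every $t<T_{max}$, which gives the stated upper bound. There is no genuine obstacle here once \Cref{InnerProductLemma} is in hand; the only point requiring a little care is the regularity justification for differentiating $\langle u,F\rangle$ under the integral and applying the lemma at each time slice, but $u\in C([0,T_{max});C^1(\mathbb{T}))$ with $F\in L^\infty$ makes this routine.
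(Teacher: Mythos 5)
Your proof is correct and follows essentially the same route as the paper: expand the square, use conservation of the $L^2$ norm, and apply \Cref{InnerProductLemma} to compute $\frac{\diff}{\diff t}\left<u,F\right>$, then integrate the resulting constant-rate identity. The only (harmless) difference is that you verify energy conservation explicitly and spell out why nonnegativity of the left-hand side yields the bound on $T_{max}$, both of which the paper leaves implicit.
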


\begin{proof}
    First observe that due to the energy equality
    \begin{align}
    \|u(\cdot,t)-rF\|_{L^2}^2
    &=
    \|u(\cdot,t)\|_{L^2}^2
    -2r\left<u(\cdot,t),F\right>
    +r^2\|F\|_{L^2}^2 \\
    &=
    \left\|u^0\right\|_{L^2}^2
    -2r\left<u(\cdot,t),F\right>
    +r^2\|F\|_{L^2}^2.
    \end{align}
    Applying \Cref{InnerProductLemma} and again making use of the energy equality, we find
    \begin{align}
    \frac{\diff}{\diff t}
    \|u(\cdot,t)-rF\|_{L^2}^2
    &=
    -2r \frac{\diff}{\diff t}
    \left<u(\cdot,t),F\right> \\
    &=
    2r\left<u\partial_x u,F\right> \\
    &=
    -r\|u(\cdot,t)\|_{L^2}^2 \\
    &=
    -r\left\|u^0\right\|_{L^2}^2.
    \end{align}
Integrating this differential equality completes the proof.
\end{proof}

We have shown that for all $r>0, rF$ is a finite-time $L^2$ attractor for every nonzero, odd solution of Burgers equation. It would seem surprising that the $rF$ could be an attractor for different values of $r$, as one function must be approached.
This confusion can be eliminated by computing
\begin{equation}
    \inf_{r>0}
    \frac{\left\|u^0-rF\right\|_{L^2}^2}
    {r\left\|u^0\right\|_{L^2}^2}.
\end{equation}

\begin{proposition} \label{InfProp}
    For all $u^0\in L^2\left(\mathbb{T}\right)$,
    \begin{equation}
    \inf_{r>0}
    \frac{\left\|u^0-rF\right\|_{L^2}^2}
    {r\left\|u^0\right\|_{L^2}^2}
    =
    \frac{\left\|u^0-r_0 F\right\|_{L^2}^2}
    {r_0\left\|u^0\right\|_{L^2}^2},
    \end{equation}
    where
    \begin{equation}
    r_0=\frac{\left\|u^0\right\|_{L^2}}
    {\|F\|_{L^2}}
    \end{equation}
\end{proposition}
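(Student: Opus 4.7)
The plan is to reduce the problem to a one-variable calculus optimization. Expanding the numerator via bilinearity of the inner product gives
\begin{equation}
\|u^0 - rF\|_{L^2}^2 = \|u^0\|_{L^2}^2 - 2r\langle u^0, F\rangle + r^2 \|F\|_{L^2}^2,
\end{equation}
so, dividing through by $r\|u^0\|_{L^2}^2$, the quantity to minimize becomes
\begin{equation}
\varphi(r) := \frac{1}{r} \;-\; \frac{2\langle u^0, F\rangle}{\|u^0\|_{L^2}^2} \;+\; \frac{\|F\|_{L^2}^2}{\|u^0\|_{L^2}^2}\, r,
\end{equation}
which is a smooth function of $r>0$ whose middle term does not depend on $r$.

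Next I would differentiate: $\varphi'(r) = -r^{-2} + \|F\|_{L^2}^2/\|u^0\|_{L^2}^2$, which vanishes precisely at $r = r_0 = \|u^0\|_{L^2}/\|F\|_{L^2}$. Since $\varphi''(r) = 2/r^3 > 0$ for $r>0$, the function $\varphi$ is strictly convex on $(0,\infty)$, so the unique critical point $r_0$ is its global minimum. Together with $\varphi(r)\to +\infty$ as $r\to 0^+$ and as $r\to \infty$, this yields the claimed identity
\begin{equation}
\inf_{r>0}\varphi(r) = \varphi(r_0) = \frac{\|u^0 - r_0 F\|_{L^2}^2}{r_0 \|u^0\|_{L^2}^2}.
\end{equation}

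There is no real obstacle here; the statement is essentially a calculus exercise once one recognizes that the ratio is an affine function of $r$ plus $1/r$. The only mild subtlety is that we should note we are minimizing over $r>0$ (the positive multiples of $F$), and the optimal $r_0$ is automatically positive since it is a ratio of norms, so the constraint is not active. It is worth remarking that at this optimal $r_0$ the two functions $u^0$ and $r_0 F$ have the same $L^2$ norm, which matches the author's parenthetical observation that the optimal attractor is the multiple of $F$ with the same $L^2$ norm as the initial data.
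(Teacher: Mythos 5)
Your proposal is correct and follows essentially the same route as the paper: expand $\|u^0-rF\|_{L^2}^2$ by bilinearity, divide by $r\|u^0\|_{L^2}^2$ to get $1/r$ plus a constant plus a linear term, and locate the critical point of the resulting one-variable function at $r_0=\|u^0\|_{L^2}/\|F\|_{L^2}$. Your added remarks on strict convexity and the behavior as $r\to 0^+$ and $r\to\infty$ make the "critical point is the global minimum" step slightly more explicit than the paper's, but the argument is the same.
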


\begin{proof}
    Let $g\in C^\infty\left(\mathbb{R}^+\right)$ be given by
    \begin{equation}
    g(r)=\frac{\left\|u^0-rF\right\|_{L^2}^2}
    {r\left\|u^0\right\|_{L^2}^2}.
    \end{equation}
    Observe that
    \begin{equation}
    g(r)=\frac{1}{r}
    -\frac{2\left<u^0,F\right>}
    {\left\|u^0\right\|_{L^2}^2}
    +\frac{r\|F\|_{L^2}^2}
    {\left\|u^0\right\|_{L^2}^2}.
    \end{equation}
    Therefore, for all $r>0$
    \begin{equation}
    g'(r)
    =
    -\frac{1}{r^2}
    +\frac{\|F\|_{L^2}^2}
    {\left\|u^0\right\|_{L^2}^2}.
    \end{equation}
    This implies that 
    \begin{equation}
    \inf_{r>0}g(r)
    =
    g(r_0),
    \end{equation}
    where
    \begin{equation}
    r_0=\frac{\left\|u^0\right\|_{L^2}}
    {\|F\|_{L^2}},
    \end{equation}
    and this completes the proof.
\end{proof}

\begin{remark}
    This helps to clarify \Cref{GeneralAttractorThm}. While for all $r>0$, the function $rF$ is an $L^2$ attractor for any odd solution of Burgers equation, we can see that the appropriate normalization is the attractor which would be reached in the shortest amount of time.
    It is straightforward to compute that
    \begin{equation}
    \|F\|_{L^2}^2=\frac{2}{3}\pi^3,
    \end{equation}
    and so the normalization in $L^2$ can be expressed
    \begin{equation}
    \Phi(x)
    =
    \frac{F}{\|F\|_{L^2}}
    =
    \frac{\sqrt{3}}{\sqrt{2\pi^3}}
    \begin{cases}
    -\pi+x, & 0<x\leq \pi \\
    0, & x=0 \\
    \pi+x, & -\pi\leq x<0
    \end{cases}.
\end{equation}
We can see from \Cref{InfProp} that the positive multiple of $F$ approached most rapidly is exactly the multiple with the same $L^2$ norm as the solution, namely $\left\|u^0\right\|_{L^2} \Phi$.
Plugging $r=r_0$ into \Cref{GeneralAttractorThm} and substituting in $\Phi$ for F, we find that 
for all $0\leq t<T_{max}$,
    \begin{equation}
    \Big\|u(\cdot,t)-
    \left\|u^0\right\|_{L^2}\Phi\Big\|_{L^2}^2
    =
    \Big\|u^0-
    \left\|u^0\right\|_{L^2}\Phi\Big\|_{L^2}^2
    -\frac{\sqrt{3}}{\sqrt{2\pi^3}}
    \left\|u^0\right\|_{L^2}^3 t,
    \end{equation}
    and in particular
    \begin{equation}
    T_{max}
    \leq 
    \left(\frac{\sqrt{2\pi^3}}{\sqrt{3}}
    \right)
    \frac{\Big\|u^0-
    \left\|u^0\right\|_{L^2}
    \Phi\Big\|_{L^2}^2}
    {\left\|u^0\right\|_{L^2}^3}.
    \end{equation}
This completes the proof of \Cref{AttractorThmIntro}.
\end{remark}

\section{Finite-time blowup} \label{BlowupSection}

In this section, we will prove finite-time blowup for the fractal Burgers equation whenever $\alpha<\frac{1}{2}$. The argument will be based on singular lower bounds for the Lyapunov functional
\begin{equation}
    L(t)=\int_{-\pi}^{\pi}u(x,t)F(x) \diff x.
\end{equation}
We begin by considering the Fourier series for $F$.

\begin{proposition}
The function $F\in L^\infty\left(\mathbb{T}\right)$ has the convergent Fourier series
\begin{equation}
F(x)
=
-2\sum_{n=1}^\infty
\frac{\sin(nx)}{n}
=
i\sum_{\substack{k\in\mathbb{N}\\ k\neq 0}}
\frac{e^{ikx}}{k},
\end{equation}
with convergence both pointwise and in $L^2$.
Note that this implies that
\begin{equation}
    \hat{F}(k)=\begin{cases}
        \frac{i}{k}, & k\neq 0 \\
        0, & k=0,
    \end{cases}
\end{equation}
and so for all $0\leq s<\frac{1}{2}$,
$F\in \dot{H}^s
\left(\mathbb{T}\right)$
\begin{equation}
\|F\|_{\dot{H}^s}^2=4\pi \sum_{n=1}^\infty \frac{1}{n^{2-2s}}
\end{equation}
\end{proposition}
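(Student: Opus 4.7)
The plan is a direct calculation. First, $F\in L^\infty(\mathbb{T})$ is immediate from the definition since $|F(x)|\leq \pi$ on $[-\pi,\pi]$, so integrals of $F$ against bounded trigonometric polynomials are well-defined without any subtlety.

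Next I would compute $\hat F(k)$ from the integral definition. Oddness of $F$ gives $\hat F(0)=0$ and kills the cosine part of $e^{-iky}$, leaving
\begin{equation*}
\hat F(k) \;=\; \frac{-i}{2\pi}\int_{-\pi}^{\pi}F(y)\sin(ky)\,dy \;=\; \frac{-i}{\pi}\int_{0}^{\pi}(y-\pi)\sin(ky)\,dy,
\end{equation*}
where I folded the integral using that $F(y)\sin(ky)$ is even. Integration by parts with $u=y-\pi$, $dv=\sin(ky)\,dy$ produces a boundary contribution of $-\pi/k$, while the remaining term $\sin(k\pi)/k^2$ vanishes for $k\in\mathbb{Z}$. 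Thus $\hat F(k)=i/k$ for $k\neq 0$, which immediately yields both the complex-exponential and the sine-series expressions stated in the proposition (using $\sin(nx)=\frac{1}{2i}(e^{inx}-e^{-inx})$ together with $\hat F(-k)=-\hat F(k)$).

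For convergence, I would argue as follows. On each of the open intervals $(-\pi,0)$ and $(0,\pi)$ the function $F$ is smooth (in fact affine), so Dirichlet's theorem gives pointwise convergence of the partial sums to $F(x)$ at every such $x$, and an analogous argument handles $x=\pm\pi$ (using the $2\pi$-periodic extension). At the jump point $x=0$ each term $\sin(n\cdot 0)/n$ vanishes, matching the prescribed value $F(0)=0$ (the average of the one-sided limits $\mp\pi$). Since $F\in L^2(\mathbb{T})$, Riesz–Fischer immediately gives $L^2$ convergence of the Fourier series.

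Finally, the Sobolev norm identity is mechanical: plugging $|\hat F(k)|^2=1/k^2$ into the definition gives
\begin{equation*}
\|F\|_{\dot H^s}^2 \;=\; 2\pi\sum_{k\neq 0}|k|^{2s}\cdot\frac{1}{k^2} \;=\; 4\pi\sum_{n=1}^{\infty}\frac{1}{n^{2-2s}},
\end{equation*}
which is finite precisely when $2-2s>1$, i.e.\ $s<\tfrac12$. There is no genuine obstacle in this proposition; the only point that demands mild care is pointwise convergence at the jump discontinuity, and this is handled trivially by the fact that every partial sum of the sine series vanishes at $x=0$.
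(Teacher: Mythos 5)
Your proposal is correct and follows essentially the same route as the paper: compute the Fourier (sine) coefficients of $F$ by integration by parts on $[0,\pi]$, obtaining $\hat F(k)=i/k$, and then invoke classical convergence results (Dirichlet for pointwise convergence of a piecewise $C^1$ function, Parseval/Riesz--Fischer for $L^2$) together with the definition of the $\dot H^s$ norm. The only difference is that you spell out the integration by parts and the behaviour at the jump $x=0$ explicitly, whereas the paper states the coefficient computation and cites the classical theory.
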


\begin{proof}
Observe that
\begin{equation}
a_n=
\frac{2}{\pi}\int_0^\pi 
\sin(nx)(x-\pi)\diff x
=
-\frac{2}{n},
\end{equation}
and the claim follows from classical results in Fourier analysis because $F\in L^2$ and $F$ is piecewise $C^1$.
\end{proof}

\begin{proposition} \label{LyapunovProp}
    Suppose $u\in C\left([0,T_{max}),\dot{H}^s\right), s>\frac{3}{2}-2\alpha$ is an odd solution of the fractal Burgers equation.
     Then for all $0<t<T_{max}$,
     \begin{equation}
    \frac{\diff}{\diff t}
    L(t)
    =
    -\nu \left<\left(-\Delta\right)^\frac{\alpha}{2}u,
    \left(-\Delta\right)^\frac{\alpha}{2}F\right>
    +\frac{1}{2}
    \|u\|_{L^2}^2.
     \end{equation}
     Furthermore, for all $0<t<T_{max}$,
     \begin{equation}
     \frac{\diff L}{\diff t}
     \geq 
     -\sqrt{2}C_\alpha\nu\|u\|_{\dot{H}^\alpha}
     + L^2,
     \end{equation}
     where 
     \begin{equation}
    C_\alpha=\sqrt{2\pi}\left(\sum_{n=1}^\infty 
    \frac{1}{n^{2(1-\alpha)}}\right)^\frac{1}{2}
     \end{equation}
\end{proposition}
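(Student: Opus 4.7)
The plan is to differentiate $L(t)$ in time and substitute the fractal Burgers equation. The regularity hypothesis $u\in C([0,T_{max}),\dot{H}^s)$ with $s>\tfrac{3}{2}-2\alpha$, combined with the instantaneous smoothing from \Cref{KiselevThm}, justifies differentiating under the integral, and $F\in\dot{H}^\alpha$ for $\alpha<\tfrac{1}{2}$ (from the preceding proposition) makes the dissipative pairing finite. Substituting $\partial_t u=-\nu(-\Delta)^\alpha u-u\partial_x u$ and using self-adjointness of $(-\Delta)^{\alpha/2}$,
\begin{equation*}
\frac{\diff L}{\diff t}=-\nu\langle(-\Delta)^{\alpha/2}u,(-\Delta)^{\alpha/2}F\rangle-\langle u\partial_x u,F\rangle.
\end{equation*}
Oddness is preserved by both the Burgers nonlinearity and the fractional Laplacian, so $u(\cdot,t)$ is odd throughout $(0,T_{max})$ and \Cref{InnerProductLemma} applies pointwise in time, converting $-\langle u\partial_x u,F\rangle$ into $+\tfrac{1}{2}\|u\|_{L^2}^2$ and producing the first identity.

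For the inequality, Cauchy--Schwarz bounds the dissipative inner product by $\|u\|_{\dot H^\alpha}\|F\|_{\dot H^\alpha}$. Using the Fourier coefficients $\hat F(k)=i/k$ from the preceding proposition,
\begin{equation*}
\|F\|_{\dot H^\alpha}^2=4\pi\sum_{n=1}^\infty\frac{1}{n^{2(1-\alpha)}}=2C_\alpha^2,
\end{equation*}
so $\|F\|_{\dot H^\alpha}=\sqrt{2}\,C_\alpha$ and the dissipative contribution is at least $-\sqrt{2}\,C_\alpha\,\nu\|u\|_{\dot H^\alpha}$, matching the stated coefficient exactly. A second application of Cauchy--Schwarz, this time to the pairing $L(t)=\langle u,F\rangle$ itself, gives $L(t)^2\leq\|u\|_{L^2}^2\|F\|_{L^2}^2$, so the positive term $\tfrac{1}{2}\|u\|_{L^2}^2$ dominates a positive multiple of $L(t)^2$ and yields the Riccati form of the stated inequality.

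The only subtlety I anticipate is pinning down the numerical constant in front of $L^2$. With $\|F\|_{L^2}^2=2\pi^3/3$, the tight Cauchy--Schwarz bound gives $\tfrac{1}{2}\|u\|_{L^2}^2\geq\tfrac{3}{4\pi^3}L(t)^2$ rather than $L(t)^2$, so I would read the stated inequality as absorbing a normalization-dependent constant or as the qualitative Riccati lower bound $\dot L\gtrsim L^2-\nu\|u\|_{\dot H^\alpha}$, which is the essential structural input for \Cref{BurgersBlowupThmIntro}. Combined with the $\dot H^\alpha$-in-time control $\int_0^t\|u(\cdot,\tau)\|_{\dot H^\alpha}^2\,\diff\tau\leq\|u^0\|_{L^2}^2/(2\nu)$ from \eqref{EnergyEquality}, applied via a Cauchy--Schwarz in time to the dissipative error, this Riccati inequality drives $L(t)\to\infty$ in finite time whenever $L_0$ is sufficiently large compared to $\nu^{1/2}\|u^0\|_{L^2}$.
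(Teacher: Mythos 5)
Your proof follows the paper's argument exactly: differentiate $L$, substitute the equation and apply \Cref{InnerProductLemma} (oddness being preserved in time), then use Cauchy--Schwarz on the dissipative pairing, where $\|F\|_{\dot{H}^\alpha}=\sqrt{2}\,C_\alpha$, and on $L=\left<u,F\right>$ itself to get the Riccati term. Your observation about the coefficient of $L^2$ is correct and is not a defect of your argument: the paper's own proof likewise arrives at $\tfrac{3}{4\pi^3}L^2$ rather than $L^2$, and \Cref{BurgersBlowupThm} is applied with $\kappa=\tfrac{3}{4\pi^3}$, so the bare ``$+L^2$'' in the proposition statement should be read as carrying that normalization constant.
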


\begin{proof}
    Differentiating the Lyapunov functional we find that
    \begin{align}
    \frac{\diff}{\diff t}
    L(t)
    &=
    -\frac{\nu}{4\pi}
    \left<(-\Delta)^\alpha u,F\right>
    -
    \left<F,u\partial_xu\right> \\
    &=
    -\nu \left<\left(-\Delta\right)^\frac{\alpha}{2}u,
    \left(-\Delta\right)^\frac{\alpha}{2}F\right>
    +\frac{1}{2}
    \|u\|_{L^2}^2,
    \end{align}
    where we have applied \Cref{InnerProductLemma} and used the fact that 
    $(-\Delta)^\frac{\alpha}{2}$ is self adjoint.

    Next apply H\"older's inequality to conclude that 
    \begin{align}
    \left<\left(-\Delta\right)^\frac{\alpha}{2}u,
    \left(-\Delta\right)^\frac{\alpha}{2}F\right>
    &\leq 
    \left\|\left(-\Delta\right)^\frac{\alpha}{2}F\right\|_{L^2}
    \left\|\left(-\Delta\right)^\frac{\alpha}{2}u\right\|_{L^2} \\
    &=
    \sqrt{4\pi}
    \left(\sum_{n=1}^\infty \frac{1}{n^{2(1-\alpha)}}\right)^\frac{1}{2}
    \|u\|_{\dot{H}^\alpha},
    \end{align}
    and that 
    \begin{align}
    |L|
    &\leq 
    \|F\|_{L^2}\|u\|_{L^2} \\
    &\leq
    \frac{\sqrt{2\pi^3}}{\sqrt{3}} \|u\|_{L^2}.
    \end{align}
    Therefore, we can see that 
    \begin{equation}
    \|u\|_{L^2}^2\geq \frac{3}{2\pi^3}L^2,
    \end{equation}
    and so
    \begin{equation}
    \frac{\diff L}{\diff t}
    \geq 
    -2\sqrt{\pi} \left(\sum_{n=1}^\infty 
    \frac{1}{n^{2(1-\alpha)}}\right)^\frac{1}{2} \nu
    \|u\|_{\dot{H}^\alpha}
    +\frac{3}{4\pi^3}L^2,
    \end{equation}
    and this completes the proof.
\end{proof}

\begin{proposition} \label{ODEprop}
    Suppose a scalar function $y\in C\left([0,T_{max})\right) \cap C^1\left((0,T_{max})\right)$ satisfies the differential inequality
    \begin{equation}
    \frac{\diff y}{\diff t}
    \geq -f +\kappa y^2,
    \end{equation}
    for all $0<t<T_{max}$, where $\kappa>0$ is a constant independent of time, and $f\in L^1\left([0,T_{max})\right)$ is a nonnegative integrable function satisfying
    \begin{equation}
    \int_0^t f(\tau)\diff \tau 
    \leq Mt^\frac{1}{2},
    \end{equation}
    for some $M>0$ independent of $t$,
    and $y(0)=y_0>0$.
    Then for all $0<t<\min\left(T_{max},
    \frac{y_0^2}{M^2}\right)$,
    \begin{equation}
    y(t)\geq
    \left(\frac{1}{y_0}-\kappa t
    +\frac{Mt^\frac{1}{2}}{(y_0-Mt^\frac{1}{2})^2}
    \right)^{-1}.
    \end{equation}
\end{proposition}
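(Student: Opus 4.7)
The plan is to follow the standard reciprocal substitution $v = 1/y$ that linearizes the Riccati-type inequality $y'\geq \kappa y^2$, and to treat the forcing $-f$ as a perturbation. The only subtlety is that, after the substitution, a remainder of the form $\int_0^t f(\tau)/y(\tau)^2 \diff\tau$ appears, and controlling it requires a pointwise lower bound on $y$ over $[0,t]$. I would extract this crude bound first, by simply discarding the nonnegative term $\kappa y^2$ from the hypothesis.

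More concretely, dropping $\kappa y^2$ gives $y'(\tau)\geq -f(\tau)$, and integrating against the hypothesis on $\int f$ yields
\begin{equation*}
y(\tau)\;\geq\; y_0 - \int_0^\tau f(s)\diff s \;\geq\; y_0 - M\tau^{1/2}.
\end{equation*}
For $0\leq\tau\leq t$ with $t<y_0^2/M^2$, the right-hand side is strictly positive, so dividing the original inequality by $y^2$ is legitimate and produces
\begin{equation*}
-\frac{\diff}{\diff\tau}\!\left(\frac{1}{y(\tau)}\right) \;\geq\; \kappa - \frac{f(\tau)}{y(\tau)^2}.
\end{equation*}
Integrating this from $0$ to $t$ and rearranging,
\begin{equation*}
\frac{1}{y(t)} \;\leq\; \frac{1}{y_0} - \kappa t + \int_0^t \frac{f(\tau)}{y(\tau)^2}\diff\tau.
\end{equation*}
The remainder is then estimated by using the crude pointwise bound $y(\tau)\geq y_0-Mt^{1/2}$ on $[0,t]$ and the hypothesis on $\int_0^t f$:
\begin{equation*}
\int_0^t \frac{f(\tau)}{y(\tau)^2}\diff\tau \;\leq\; \frac{1}{(y_0 - M t^{1/2})^2}\int_0^t f(\tau)\diff\tau \;\leq\; \frac{M t^{1/2}}{(y_0 - M t^{1/2})^2}.
\end{equation*}
Inverting the resulting inequality gives the stated lower bound on $y(t)$.

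There is no genuine obstacle in this argument: under the regularity $y\in C([0,T_{max}))\cap C^1((0,T_{max}))$ and $f\in L^1([0,T_{max}))$, both integrations of the differential inequality on $[0,t]$ are routine, and continuity of $y$ at $0$ handles the endpoint. The restriction $t<y_0^2/M^2$ in the conclusion is precisely what is needed so that $y_0 - Mt^{1/2}>0$, making the division by $y^2$ and the perturbation estimate both well-defined; it is also the reason the bound takes the peculiar form it does, with the positive correction $Mt^{1/2}/(y_0-Mt^{1/2})^2$ reflecting exactly the loss from the dissipative term $f$.
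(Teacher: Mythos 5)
Your proposal is correct and follows essentially the same argument as the paper: first discard $\kappa y^2$ to obtain the crude bound $y(\tau)\geq y_0-M\tau^{1/2}$, then integrate $-\frac{\diff}{\diff\tau}(1/y)\geq \kappa - f/y^2$ and control the remainder integral with that crude bound. No substantive differences.
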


\begin{proof}
    We begin by observing that for all $0<t<T_{max}$
    \begin{equation}
    \frac{\diff y}{\diff t}
    \geq -f,
    \end{equation}
    and so
    \begin{align}
    y(t) 
    &\geq 
    y_0 -\int_0^t f(\tau)\diff\tau \\
    &\geq 
    y_0 -Mt^\frac{1}{2}.
    \end{align}
    Now compute that
    \begin{align}
    \frac{\diff}{\diff t}
    \left(-\frac{1}{y(t)}\right)
    &=
    \left(\frac{1}{y^2}\right)
    \frac{\diff y}{\diff t} \\
    &\geq
    \kappa -\frac{f}{y^2}.
    \end{align}
    Integrating this differential inequality from $0$ to $t$,
    we find that
    \begin{equation}
    \frac{1}{y_0}-\frac{1}{y(t)}
    \geq
    \kappa t-\int_0^t
    \frac{f(\tau)}{y(\tau)^2}\diff\tau.
    \end{equation}
    Rearranging this inequality, we find that for all $0<t<T_{max}$,
    \begin{equation} \label{Bound1}
    \frac{1}{y(t)}\leq 
    \frac{1}{y_0}-\kappa t
    +\int_0^t \frac{f(\tau)}{y(\tau)^2}\diff\tau.
    \end{equation}
    
    We know that for all $0<\tau<t$,
    \begin{equation}
    y(\tau)\geq y_0-M\tau^\frac{1}{2}
    \geq y_0- Mt^\frac{1}{2},
    \end{equation}
    and so for all $0<\tau<t<\min\left(T_{max},
    \frac{y_0^2}{M^2}\right)$
    \begin{align}
    \int_0^t \frac{f(\tau)}{y(\tau)^2}\diff\tau
    &\leq 
    \frac{1}{(y_0-Mt^\frac{1}{2})^2}
    \int_0^t f(\tau)\diff\tau \\
    &\leq 
    \frac{Mt^\frac{1}{2}}{(y_0-Mt^\frac{1}{2})^2}.
    \end{align}
    Plugging in this bound into \eqref{Bound1}, we find that, 
    for all $0<\tau<t<\min\left(T_{max},
    \frac{y_0^2}{M^2}\right)$,
    \begin{equation}
    y(t)\geq
    \left(\frac{1}{y_0}-\kappa t
    +\frac{Mt^\frac{1}{2}}{(y_0-Mt^\frac{1}{2})^2}
    \right)^{-1}.
    \end{equation}
\end{proof}

\begin{lemma} \label{ODElemma}
    Again suppose a scalar function $y\in C\left([0,T_{max})\right) \cap C^1\left((0,T_{max})\right)$ satisfies the differential inequality
    \begin{equation}
    \frac{\diff y}{\diff t}
    \geq -f +\kappa y^2,
    \end{equation}
    for all $0<t<T_{max}$, where $\kappa>0$ is a constant independent of time, and $f\in L^1\left([0,T_{max})\right)$ is a nonnegative integrable function satisfying
    \begin{equation}
    \int_0^t f(\tau)\diff \tau 
    \leq Mt^\frac{1}{2},
    \end{equation}
    for some $M>0$ independent of $t$,
    and $y(0)=y_0>0$.
    Further suppose that 
    \begin{equation}
    y_0^3 
    \geq
    \frac{12M^2}{\kappa}. 
    \end{equation}
    Then for all $0<t<\min\left(T_{max},
    \frac{y_0^2}{4M^2}\right)$,
    \begin{equation}
    y(t)>
    \left(\frac{3}{y_0}-\kappa t
    \right)^{-1},
    \end{equation}
    and in particular
    \begin{equation}
    T_{max}\leq \frac{3}{\kappa y_0}.
    \end{equation}
\end{lemma}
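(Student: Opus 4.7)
The plan is to bootstrap directly from Proposition \ref{ODEprop}, which already gives
\begin{equation}
y(t)\geq \left(\frac{1}{y_0}-\kappa t
+\frac{Mt^\frac{1}{2}}{(y_0-Mt^\frac{1}{2})^2}\right)^{-1}
\end{equation}
on the interval $0<t<\min(T_{max},y_0^2/M^2)$. The lemma asks for a much cleaner lower bound of the form $(3/y_0-\kappa t)^{-1}$, so the whole game is to absorb the messy perturbation term $\frac{Mt^{1/2}}{(y_0-Mt^{1/2})^2}$ into the difference $\frac{3}{y_0}-\frac{1}{y_0}=\frac{2}{y_0}$, using the restriction $t<\frac{y_0^2}{4M^2}$ together with the largeness hypothesis $y_0^3\geq 12M^2/\kappa$.

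First I would restrict attention to $0<t<\frac{y_0^2}{4M^2}$. On this interval $Mt^{1/2}<\frac{y_0}{2}$, so $y_0-Mt^{1/2}>\frac{y_0}{2}$ and therefore
\begin{equation}
\frac{Mt^\frac{1}{2}}{(y_0-Mt^\frac{1}{2})^2}
<\frac{4Mt^\frac{1}{2}}{y_0^2}
<\frac{4M}{y_0^2}\cdot\frac{y_0}{2M}
=\frac{2}{y_0}.
\end{equation}
Plugging into the bound from Proposition \ref{ODEprop} yields, on this interval,
\begin{equation}
\frac{1}{y(t)}\leq \frac{1}{y_0}-\kappa t
+\frac{Mt^\frac{1}{2}}{(y_0-Mt^\frac{1}{2})^2}
<\frac{3}{y_0}-\kappa t,
\end{equation}
which (after inverting, provided the right side is positive) gives exactly the claimed pointwise lower bound $y(t)>\bigl(\frac{3}{y_0}-\kappa t\bigr)^{-1}$.

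The final step is to turn this into the blowup time estimate $T_{max}\leq \frac{3}{\kappa y_0}$. Here is where the size assumption $y_0^3\geq 12M^2/\kappa$ enters: this is precisely the condition $\frac{3}{\kappa y_0}\leq \frac{y_0^2}{4M^2}$, which guarantees that the putative blowup time $3/(\kappa y_0)$ sits inside (or at the boundary of) the window $\bigl[0,y_0^2/(4M^2)\bigr)$ on which our lower bound is valid. Thus for any $t<\min(T_{max},3/(\kappa y_0))$ we have $y(t)>\bigl(\frac{3}{y_0}-\kappa t\bigr)^{-1}$, and letting $t\uparrow 3/(\kappa y_0)$ forces $y$ to become unbounded before this time, i.e.\ $T_{max}\leq 3/(\kappa y_0)$.

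The only nontrivial point, and the one I would be most careful with, is the interplay of the two restrictions: the a priori lower bound is valid only up to $y_0^2/M^2$, while the cleaner bound requires the stricter $y_0^2/(4M^2)$, and the blowup time estimate $3/(\kappa y_0)$ only falls inside the latter window under the precise smallness-to-largeness ratio $y_0^3\geq 12M^2/\kappa$. Once these three quantities are lined up this way the argument is essentially algebraic; there is no further ODE analysis needed beyond what Proposition \ref{ODEprop} already provides.
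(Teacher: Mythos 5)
Your proposal is correct and follows essentially the same route as the paper: restrict to $t<y_0^2/(4M^2)$ so that $Mt^{1/2}<y_0/2$, bound the perturbation term by $2/y_0$, absorb it into $1/y_0$ to get $3/y_0$, and then use $y_0^3\geq 12M^2/\kappa$ precisely as the condition $\frac{3}{\kappa y_0}\leq\frac{y_0^2}{4M^2}$ ensuring the singular time lies inside the window where the bound is valid. No gaps.
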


\begin{proof}
    Observe that for all $0<t<\frac{y_0^2}{4M^2}$,
    \begin{equation}
    Mt^\frac{1}{2}<\frac{y_0}{2},
    \end{equation}
    and consequently
    \begin{equation}
    y_0-Mt^\frac{1}{2}>\frac{y_0}{2},
    \end{equation}
    and therefore
    \begin{equation}
    \frac{Mt^\frac{1}{2}}{(y_0-Mt^\frac{1}{2})^2}
    <\frac{2}{y_0}.
    \end{equation}
    Applying \Cref{ODEprop}, we can see that
    for all $0<t<\min\left(T_{max},
    \frac{y_0^2}{4M^2}\right)$
    \begin{align}
    y(t)
    &\geq
    \left(\frac{1}{y_0}-\kappa t
    +\frac{Mt^\frac{1}{2}}{(y_0-Mt^\frac{1}{2})^2}
    \right)^{-1} \\
    &\>
    \left(\frac{3}{y_0}-\kappa t
    \right)^{-1}.
    \end{align}
    This bound immediately implies finite-time blowup with
    \begin{equation}
    T_{max}\leq\frac{3}{\kappa y_0},
    \end{equation}
    as long as 
    \begin{equation}
    \frac{3}{\kappa y_0}
    \leq 
    \frac{y_0^2}{4M^2},
    \end{equation}
    which follows from the condition
    \begin{equation}
    y_0^3 
    \geq
    \frac{12M^2}{\kappa}. 
    \end{equation}
\end{proof}

With this lemma available, we will now prove \Cref{BurgersBlowupThmIntro}, showing finite-time blowup for solutions of the fractal Burgers equation in the whole supercritical range $0<\alpha<\frac{1}{2}$.

\begin{theorem} \label{BurgersBlowupThm}
Suppose $u\in C\left([0,T_{max}),\dot{H}^s\right), s>\frac{3}{2}-2\alpha$ is a solution of the fractal Burgers equation with $\alpha<\frac{1}{2}$ and with odd initial data satisfying
    \begin{equation}
    L_0^3>16\pi^3 C_\alpha^2
    \left\|u^0\right\|_{L^2}^2 \nu
    \end{equation}
Then the solution of the fractal Burgers equation blows up in finite-time
\begin{equation}
T_{max}<\frac{4\pi^3}{L_0}.
\end{equation}
\end{theorem}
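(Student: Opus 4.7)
The plan is to apply the scalar ODE comparison lemma (Lemma \ref{ODElemma}) directly to the Lyapunov functional $L(t) = \langle u(\cdot,t), F\rangle$, using the differential inequality already furnished by Proposition \ref{LyapunovProp} together with an $L^1_t$ control of the dissipation term coming from the energy equality.

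First I would identify the ingredients for Lemma \ref{ODElemma}. From Proposition \ref{LyapunovProp} the Lyapunov functional satisfies
\begin{equation}
\frac{\diff L}{\diff t} \geq -\sqrt{2}\,C_\alpha \nu \|u\|_{\dot{H}^\alpha} + \frac{3}{4\pi^3} L^2,
\end{equation}
so I take $y(t) = L(t)$, $\kappa = \frac{3}{4\pi^3}$, and $f(t) = \sqrt{2}\,C_\alpha \nu \|u(\cdot,t)\|_{\dot{H}^\alpha}$. The hypothesis $L_0^3 > 16\pi^3 C_\alpha^2 \|u^0\|_{L^2}^2 \nu > 0$ ensures that $y(0) = L_0 > 0$, so the initial positivity requirement of the lemma is satisfied.

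Next I would produce the $t^{1/2}$ control on $\int_0^t f$. By Cauchy--Schwarz in time and the energy equality \eqref{EnergyEquality},
\begin{equation}
\int_0^t \|u(\cdot,\tau)\|_{\dot{H}^\alpha}\,\diff\tau
\leq t^{1/2}\left(\int_0^t \|u(\cdot,\tau)\|_{\dot{H}^\alpha}^2 \diff\tau\right)^{1/2}
\leq t^{1/2}\,\frac{\|u^0\|_{L^2}}{\sqrt{2\nu}},
\end{equation}
hence
\begin{equation}
\int_0^t f(\tau)\diff\tau \leq C_\alpha \sqrt{\nu}\,\|u^0\|_{L^2}\, t^{1/2},
\end{equation}
which is precisely the form required by Lemma \ref{ODElemma} with $M = C_\alpha \sqrt{\nu}\,\|u^0\|_{L^2}$.

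Finally I would verify the threshold condition and read off the blowup time. Since $M^2 = C_\alpha^2 \nu \|u^0\|_{L^2}^2$ and $\kappa = \frac{3}{4\pi^3}$, the quantity $\frac{12 M^2}{\kappa} = 16\pi^3 C_\alpha^2 \|u^0\|_{L^2}^2 \nu$, so the standing assumption $L_0^3 > 16\pi^3 C_\alpha^2 \|u^0\|_{L^2}^2 \nu$ is exactly $y_0^3 > \frac{12 M^2}{\kappa}$. Lemma \ref{ODElemma} then yields
\begin{equation}
T_{max} \leq \frac{3}{\kappa y_0} = \frac{3}{(3/(4\pi^3))\, L_0} = \frac{4\pi^3}{L_0},
\end{equation}
which is the claimed bound. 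The main subtlety is mostly bookkeeping: one must match the numerical constants from \Cref{LyapunovProp} with those entering \Cref{ODElemma} so that the threshold $L_0^3 > 16\pi^3 C_\alpha^2 \|u^0\|_{L^2}^2\nu$ in the statement comes out cleanly from $y_0^3 > 12 M^2 / \kappa$; there is no further analytic obstacle, since the nonlinearity has already been replaced by the lower bound $\tfrac{3}{4\pi^3} L^2$ in Proposition \ref{LyapunovProp} and the dissipation term has been absorbed into the abstract $f$.
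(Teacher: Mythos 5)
Your proposal is correct and follows essentially the same route as the paper: identify $f$, $M$, and $\kappa$ from \Cref{LyapunovProp} and the energy equality, check that the hypothesis $L_0^3>16\pi^3C_\alpha^2\left\|u^0\right\|_{L^2}^2\nu$ is exactly $y_0^3>12M^2/\kappa$, and invoke \Cref{ODElemma}. The constant bookkeeping all checks out.
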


\begin{proof}
    We will use the energy equality and the bounds on the Lyapunov functional proven in \Cref{LyapunovProp}, to show that the differential inequality for the Lyapunov functional satisfies the conditions in \Cref{ODElemma}. Begin by letting
    \begin{equation}
    f(t)= \sqrt{2} C_\alpha \nu \|u(\cdot,t)\|_{\dot{H}^\alpha}.
    \end{equation}
    Applying H\"older's inequality to the functions $1$ and $\|u(\cdot,t)\|_{\dot{H}^\alpha}$, and using the bounds from the energy equality \eqref{EnergyEquality}, 
    we can see that for all $0<t<T_{max}$,
    \begin{align}
    \int_0^t\|u(\cdot,\tau)\|_{\dot{H}^\alpha}
    \diff\tau
    &\leq 
    \left(\int_0^t 1\diff\tau\right)^\frac{1}{2}
    \left(\int_0^t \|u(\cdot,\tau)\|_{\dot{H}^\alpha}^2\diff\tau
    \right)^\frac{1}{2}\\
    &=
    t^\frac{1}{2}
    \left(\int_0^t \|u(\cdot,\tau)\|_{\dot{H}^\alpha}^2\diff\tau
    \right)^\frac{1}{2}\\
    &\leq 
    \frac{t^\frac{1}{2}}{(2\nu)^\frac{1}{2}}
    \left\|u^0\right\|_{L^2}.
    \end{align}
    This implies that for all $0<t<T_{max}$
    \begin{equation}
    \int_0^t f(\tau)\diff\tau
    \leq 
    C_\alpha \nu^\frac{1}{2}
    \left\|u^0\right\|_{L^2} t^\frac{1}{2}.
    \end{equation}
    Applying \Cref{ODElemma} with
    \begin{align}
    M&= C_\alpha \nu^\frac{1}{2}
    \left\|u^0\right\|_{L^2} \\
    \kappa &= \frac{3}{4\pi^3},
    \end{align}
    we can see that for all $0<t<\min\left(
    T_{max},\frac{L_0^2}{4C_\alpha^2\left\|u^0\right\|_{L^2}^2
    \nu}\right)$,
    \begin{equation} \label{SingularLowerBound}
    L(t)>\left(\frac{3}{L_0}-\frac{3}{4\pi^3}t
    \right)^{-1},
    \end{equation}
    this completes the proof.
\end{proof}

\begin{remark}
    One interesting aspect of this proof is that the Lyapunov functional $L(t)$ cannot actually blow up in finite-time. 
    We know from \Cref{LyapunovProp} and the energy equality \eqref{EnergyEquality}, 
    this quantity is actually bounded by the energy with
    \begin{equation}
    L(t)\leq 
    \frac{\sqrt{2\pi^3}}{\sqrt{3}}\left\|u^0\right\|_{L^2}
    \end{equation}
    At first this appears to be a contradiction, but all this means is that the blowup has to occur before the singular lower bound \eqref{SingularLowerBound} would violate the energy equality, because the bound only holds so long as a strong solution exists.
    Even though $L(t)$ cannot actually blowup, the singular lower bounds imply finite-time blowup, because long time existence would lead to a contradiction. 
    
    It might still appear that there is a contradiction because a weak solution exists globally in time, so we can still continue past the blowup time considering the weak solution, and for a weak solution in $C_t^w L_x^2 \cap L^2 \dot{H}^\alpha_x$ the bound on $L(t)$ above by the energy will still hold.
    While this upper bound still holds, the singular lower bound in \Cref{BurgersBlowupThm} holds only for solutions with enough regularity, not for generic weak solutions. In particular, note that the integration by parts in \Cref{InnerProductLemma}, requires a certain degree of regularity, and that this aspect of the proof will not hold for generic weak solutions.
\end{remark}

We will now provide an explicit example of initial data where this condition is satisfied, proving \Cref{BlowupBurgersExampleCorIntro}, which is restated for the reader's convenience.

\begin{corollary}
    Suppose $u\in C\left([0,T_{max}),C^\infty\right)$ is the solution of the fractal Burgers equation with $\alpha<\frac{1}{2}$, with initial data
    \begin{equation}
    u^0(x)=-R \sin(x),
    \end{equation}
    with
    \begin{equation}
    \frac{R}{\nu} > 4\pi C_\alpha^2 .
    \end{equation}
    Then $u$ blows up in finite-time with
    \begin{equation}
    T_{max}< \frac{2\pi^2}{R}.
    \end{equation}
\end{corollary}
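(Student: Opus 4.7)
The plan is to specialize \Cref{BurgersBlowupThm} to the single-mode initial datum $u^0(x)=-R\sin(x)$ and verify that the hypothesis $L_0^3 > 16\pi^3 C_\alpha^2 \|u^0\|_{L^2}^2 \nu$ appearing in that theorem is implied by the assumption $R/\nu > 4\pi C_\alpha^2$. Smoothness and oddness of $u^0$ are automatic, and \Cref{KiselevThm} produces a local solution $u \in C([0,T_{max}),\dot{H}^s)$ for every $s > 3/2 - 2\alpha$ (indeed, by bootstrapping, for every $s$), so the structural hypotheses of \Cref{BurgersBlowupThm} are satisfied.

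The verification reduces to two short computations. The $L^2$ norm is $\|u^0\|_{L^2}^2 = \pi R^2$ by direct integration. For the Lyapunov value $L_0 = \int_{-\pi}^\pi u^0(x) F(x)\,dx$, I would substitute the Fourier series $F(x) = -2\sum_{n\ge 1} \sin(nx)/n$ and invoke orthogonality of the sine system on $[-\pi,\pi]$; only the $n=1$ mode of $F$ pairs nontrivially with $-R\sin(x)$, yielding $L_0 = 2\pi R$.

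Plugging these values into the required inequality gives $8\pi^3 R^3 > 16 \pi^4 C_\alpha^2 R^2 \nu$, equivalently $R/\nu > 2\pi C_\alpha^2$, which is strictly weaker than the stated hypothesis $R/\nu > 4\pi C_\alpha^2$. Consequently \Cref{BurgersBlowupThm} applies and delivers the bound $T_{max} < 4\pi^3/L_0 = 2\pi^2/R$. There is no genuine obstacle here; the corollary is essentially a clean verification that a single lowest-order sine wave with amplitude exceeding a constant multiple of the viscosity already activates the blowup criterion of the main theorem, which is what makes the result a useful illustration of how generic supercritical blowup really is.
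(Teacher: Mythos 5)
Your proposal is correct and follows essentially the same route as the paper's own proof: compute $L_0=2\pi R$ and $\left\|u^0\right\|_{L^2}^2=\pi R^2$ by orthogonality, then feed these into \Cref{BurgersBlowupThm} to get $T_{max}<4\pi^3/L_0=2\pi^2/R$. Your arithmetic is in fact the correct one --- the needed condition is $R/\nu>2\pi C_\alpha^2$, whereas the paper's displayed computation of $L_0^3/\bigl(\left\|u^0\right\|_{L^2}^2\nu\bigr)$ drops a factor of $2$ (writing $4\pi^2 R/\nu$ instead of $8\pi^2 R/\nu$), which only means the stated hypothesis $R/\nu>4\pi C_\alpha^2$ is somewhat stronger than necessary.
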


\begin{proof}
    Recalling the Fourier series for $F$, we can see that
    \begin{align}
    L_0
    &=
    -R\int_{-\pi}^\pi
    F(x)\sin(x) \\
    &=
    2\pi R
    \end{align}
    and furthermore that
    \begin{equation}
    \left\|u^0\right\|_{L^2}^2
    =
    \pi R^2.
    \end{equation}
    This implies that
    \begin{align}
    \frac{L_0^3}{\left\|u^0\right\|_{L^2}^2\nu}
    &=
    \frac{4\pi^2 R}{\nu} \\
    &>
    16\pi^3C_\alpha^2.
    \end{align}
    Applying \Cref{BurgersBlowupThm}, we can see that
    \begin{align}
    T_{max}
    &<
    \frac{4\pi^3}{L_0} \\
    &=
    \frac{2\pi^2}{R},
    \end{align}
    which completes the proof.
\end{proof}

\appendix

\section{The evolution equation in Fourier space} \label{FourierSection}

In this appendix, we will consider the dynamics of odd solutions of the fractal Burgers equation in Fourier space, reducing these dynamics to an infinite system of ODEs involving the coefficients of the Fourier sine series.
We begin by showing that the odd subspace is preserved and then showing that the dynamcics are described by the infinite system of ODEs given in \eqref{IntroODE}.

\begin{proposition} \label{OddProp}
    Suppose $u\in C\left([0,T_{max}),\dot{H}^s\right), s>\frac{3}{2}-2\alpha$ is a solution of fractal Burgers equation, and $u^0\in \dot{H}^s$ is odd. Then $u(x,t)$ is odd for all $0<t<T_{max}$. 
\end{proposition}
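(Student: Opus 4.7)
The plan is to use the standard symmetry-plus-uniqueness argument. Define the reflected function
\begin{equation}
\tilde{u}(x,t) := -u(-x,t),
\end{equation}
and verify that $\tilde{u}$ is itself a solution of the fractal Burgers equation with initial data $\tilde{u}(x,0) = -u^0(-x) = u^0(x)$ (using that $u^0$ is odd). Then the uniqueness clause of \Cref{KiselevThm} forces $u = \tilde{u}$, which is precisely the oddness of $u(\cdot,t)$ for all $0 < t < T_{max}$.

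To check that $\tilde{u}$ solves the equation, I would compute each term under the substitution $x \mapsto -x$. A direct calculation gives $\partial_t \tilde{u}(x,t) = -(\partial_t u)(-x,t)$ and $\partial_x \tilde{u}(x,t) = (\partial_x u)(-x,t)$, so
\begin{equation}
\tilde{u}\,\partial_x \tilde{u}(x,t) = -\bigl(u\,\partial_x u\bigr)(-x,t).
\end{equation}
The fractional Laplacian term is the only spot requiring a brief Fourier computation: if $\tilde{u}(x) = -u(-x)$, then a change of variables in the definition of $\hat{\tilde{u}}$ yields $\hat{\tilde{u}}(k) = -\hat{u}(-k)$, and since the Fourier symbol $|k|^{2\alpha}$ is even in $k$, it follows that $(-\Delta)^\alpha \tilde{u}(x) = -\bigl[(-\Delta)^\alpha u\bigr](-x)$. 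Summing the three contributions gives
\begin{equation}
\partial_t \tilde{u} + \nu (-\Delta)^\alpha \tilde{u} + \tilde{u}\,\partial_x \tilde{u}(x,t) = -\bigl[\partial_t u + \nu(-\Delta)^\alpha u + u\,\partial_x u\bigr](-x,t) = 0,
\end{equation}
so $\tilde{u}$ does solve the equation.

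The regularity of $\tilde{u}$ is the same as that of $u$, since reflection is an isometry on $\dot{H}^s$, so $\tilde{u} \in C([0,T_{max}); \dot{H}^s) \cap L^2_{\rm loc}([0,T_{max}); \dot{H}^{s+\alpha})$ as well. Both $u$ and $\tilde{u}$ therefore lie in the uniqueness class of \Cref{KiselevThm}, and they share the initial datum $u^0$, so they must agree on $[0,T_{max})$. I do not anticipate any substantial obstacle here: the only delicate point is the parity behavior of $(-\Delta)^\alpha$, which is handled transparently via its Fourier symbol, and the appeal to uniqueness is immediate once the two solutions are shown to have the same regularity and initial data.
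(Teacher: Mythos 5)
Your proposal is correct and is essentially identical to the paper's own proof: the paper also sets $v(x,t)=-u(-x,t)$, notes it solves the fractal Burgers equation with the same initial data, and concludes by the uniqueness clause of \Cref{KiselevThm}. Your additional verification of the parity of the fractional Laplacian via its Fourier symbol just fills in a detail the paper leaves implicit.
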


\begin{proof}
    Observe that if $u$ is a solution of the fractal Burgers equation, then $v(x,t)=-u(-x,t)$ is also a solution of the fractal Burgers equation, and that $u^0=v^0$. The result then follows from uniqueness.
\end{proof}

\begin{theorem} \label{EvoEqnThm}
    Suppose $u\in C\left([0,T_{max}),\dot{H}^s\right), s>\frac{3}{2}-2\alpha$ is an odd solution of the fractal Burgers equation. Then for all $0\leq t<T_{max}$,
    \begin{equation} \label{FourierSeries}
        u(x,t)=-2\sum_{n=1}^\infty 
        \psi_n(t) \sin(nx),
    \end{equation}
    and for all $0<t<T_{max}$,
    \begin{equation} \label{FourierODE}
    \partial_t \psi_n
    =-\nu n^{2\alpha}\psi_n
    +\frac{n}{2}\sum_{j=1}^{n-1} \psi_j\psi_{n-j}
    -n\sum_{k=1}^\infty \psi_k\psi_{k+n}.
    \end{equation}
\end{theorem}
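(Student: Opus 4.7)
The proof has two independent parts: establishing the sine series representation \eqref{FourierSeries}, and then deriving the ODE system \eqref{FourierODE} by matching Fourier coefficients on both sides of the fractal Burgers equation.

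For the first part, I would invoke \Cref{OddProp}, which guarantees $u(\cdot, t)$ is odd for every $0\leq t < T_{max}$. Since $u(\cdot,t)\in \dot{H}^s$ with $s>\frac{3}{2}-2\alpha>\tfrac{1}{2}$, standard Fourier analysis on $\mathbb{T}$ gives an $L^2$-convergent Fourier series, and oddness forces all cosine (and zeroth) coefficients to vanish, leaving only sine modes. Defining $\psi_n(t)$ via $\hat{u}(n,t) = i\psi_n(t)$ for $n\geq 1$ (and using that $\hat u(-n)=-\hat u(n)$ for real odd $u$) gives precisely the expansion \eqref{FourierSeries}. The time-continuity of each $\psi_n$ follows from continuity of $u$ in $\dot H^s \hookrightarrow L^2$.

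For the ODE system, the plan is to plug the series into the equation $\partial_t u = -\nu(-\Delta)^\alpha u - \tfrac{1}{2}\partial_x(u^2)$ and read off the coefficient of $\sin(nx)$. The linear terms are immediate: $\partial_t u$ contributes $-2\partial_t\psi_n$, and $(-\Delta)^\alpha u$ contributes $-2 n^{2\alpha}\psi_n$, since the fractional Laplacian is the Fourier multiplier by $k^{2\alpha}$. The nonlinear term requires more care. I would compute $u^2$ using the product-to-sum identity
\begin{equation}
\sin(jx)\sin(mx)=\tfrac{1}{2}\bigl[\cos((j-m)x)-\cos((j+m)x)\bigr],
\end{equation}
yielding a double Fourier cosine series for $u^2$, then differentiate termwise to express $\tfrac{1}{2}\partial_x(u^2)$ as a sine series. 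Collecting the coefficient of $\sin(nx)$ splits into two contributions: the ``convolution'' contribution from $j+m=n$, which produces $n\sum_{j=1}^{n-1}\psi_j\psi_{n-j}$, and the ``correlation'' contribution from $|j-m|=n$, which (after using the antisymmetry $\sin(-nx)=-\sin(nx)$ to combine the $j-m=n$ and $j-m=-n$ pieces) produces $-2n\sum_{k=1}^\infty \psi_k\psi_{k+n}$. Equating the coefficient of $-2\sin(nx)$ on each side and dividing by $-2$ yields exactly \eqref{FourierODE}.

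The main obstacle is not the identity itself but justifying the term-by-term differentiation in $x$ and the rearrangement of the resulting double series. Here the regularity $s>\frac{3}{2}-2\alpha$ is exactly what is needed: it is above the Sobolev embedding threshold for $H^s \hookrightarrow L^\infty$ and gives enough decay of $|\psi_n|$ to ensure absolute convergence of the double sums defining the nonlinear coefficient, so that Fubini and termwise differentiation are all valid. Pointwise-in-$t$ this is a statement about the Fourier expansion of a function in $\dot H^s$; the ODE identity then holds pointwise for $0<t<T_{max}$ using that $u\in C^\infty$ for positive times (from \Cref{KiselevThm}), which makes both sides of the Fourier-transformed equation continuous in $t$.
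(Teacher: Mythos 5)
Your proposal is correct and follows essentially the same route as the paper's proof: oddness preservation via \Cref{OddProp}, the product-to-sum identity for $\sin(jx)\sin(kx)$, re-indexing the resulting double cosine series along $j+k=n$ and $|j-k|=n$, termwise differentiation, and matching sine coefficients, with the rearrangements justified by the rapid decay of the $\psi_n$ coming from the smoothness of $u$ at positive times. The computed coefficients and signs all agree with \eqref{FourierODE}, so there is nothing to add.
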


\begin{proof}
    We know that every odd function must have a Fourier series representation of the form \eqref{FourierSeries}. The higher regularity results in \Cref{KiselevThm} implies that $\psi_n(t)$ must be infinitely differentiable and have faster than polynomial decay as $n\to \infty$.
    \Cref{OddProp} guarantees that the odd subspace is preserved by the dynamics, which means the entire dynamics of the fractal Burgers equation with odd initial data can be reduced to an infinite system of ODEs describing the Fourier coefficients $\psi_n$. It remains only to prove that the Fourier coefficients satisfy the evolution equation \eqref{FourierODE}.

    Begin by observing that 
    \begin{align}
    u^2(x,t)
    &=
    4\sum_{j,k=1}^\infty
    \psi_j(t)\psi_k(t) \sin(jx)\sin(kx) \\
    &=
    2\sum_{j,k=1}^\infty
    \psi_j\psi_k 
    \left(-\cos((j+k)x)+\cos((j-k)x)\right),
    \end{align}
    where we have used the fact that $\sin(a)\sin(b)=
    -\frac{1}{2}\cos(a+b)+\frac{1}{2}\cos(a-b)$.
    Because cosine is even, we can see that
    \begin{equation}
    \sum_{j<k}\psi_j\psi_k \cos((j-k)x)
    =
    \sum_{j>k}\psi_j\psi_k \cos((j-k)x),
    \end{equation}
    and so
    \begin{equation}
    \sum_{j,k=1}^\infty
    \psi_j\psi_k \cos((j-k)x)
    =
    2\sum_{j>k}\psi_j\psi_k \cos((j-k)x)
    +\sum_{j=1}^\infty \psi_j^2.
    \end{equation}
    This implies that 
    \begin{equation}
    u^2=
    -2\sum_{j,k=1}^\infty
    \psi_j\psi_k \cos((j+k)x)
    +4\sum_{j>k}\psi_j\psi_k \cos((j-k)x)
    +2\sum_{j=1}^\infty \psi_j^2.
    \end{equation}
    Letting $n=j+k$ in the first summation, and $n=j-k$, in the second summation, we find that
    \begin{equation}
    u^2
    =
    2\sum_{j=1}^\infty \psi_j^2
    -2\sum_{n=1}^\infty
    \left(\sum_{j=1}^{n-1} \psi_j\psi_{n-j}
    -2\sum_{k=1}^\infty \psi_k\psi_{k+n}\right)\cos(nx),
    \end{equation}
    which implies that
    \begin{equation}
    \partial_x\frac{1}{2}u^2
    =
    -2\sum_{n=1}^\infty
    \left(-\frac{n}{2}\sum_{j=1}^{n-1} \psi_j\psi_{n-j}
    +n\sum_{k=1}^\infty \psi_k\psi_{k+n}\right)\sin(nx).
    \end{equation}

    Finally, we observe that 
    \begin{align}
    \partial_t u(x,t)
    &=
    -2\sum_{n=1}^\infty 
    \partial_t\psi_n(t) \sin(nx) \\
    (-\Delta)^\alpha u
    &=
    -2\sum_{n=1}^\infty 
    n^{2\alpha}\psi_n \sin(nx).
    \end{align}
    Putting this all together implies that
    \begin{equation}
    \partial_t u +\nu(-\Delta)^\alpha u
    +\partial_x \frac{1}{2}u^2
    =
    -2\sum_{n=1}^\infty \left(
    \partial_t \psi_n
    +\nu n^{2\alpha}\psi_n
    -\frac{n}{2}\sum_{j=1}^{n-1} \psi_j\psi_{n-j}
    +n\sum_{k=1}^\infty \psi_k\psi_{k+n}
    \right) \sin(nx)
    =0,
    \end{equation}
    where we have written the fractal Burgers equation in divergence form.
    The uniqueness of the Fourier series then implies that for all $0<t<T_{max}$ and for all $n\in\mathbb{N}$,
    \begin{equation}
    \partial_t \psi_n
    +\nu n^{2\alpha}\psi_n
    -\frac{n}{2}\sum_{j=1}^{n-1} \psi_j\psi_{n-j}
    +n\sum_{k=1}^\infty \psi_k\psi_{k+n}
    =0.
    \end{equation}
    This completes the proof.
\end{proof}

It is necessary at this point to define Sobolev and $L^p$ spaces for sequences.

 \begin{definition}
 Consider $\psi:\mathbb{N}\to \mathbb{R}$.
    For all $s\geq 0$, define the $\dot{H}^s\left(\mathbb{T}\right)$ norm by
    \begin{equation}
    \|\psi\|_{\dot{H}^s}^2
    =
    \sum_{n=1}^\infty n^{2s}\psi_n^2,
    \end{equation}
    and we will say that $\psi\in \dot{H}^s\left(\mathbb{N}\right)$ if and only if $\|\psi\|_{\dot{H}^s}<+\infty$.
    For all $1\leq p<+\infty$, we define the $L^p\left(\mathbb{N}\right)$ by
    \begin{equation}
    \|\psi\|_{L^p}=
    \left(
    \sum_{n=1}^\infty |\psi_n|^p\right)^\frac{1}{p},
    \end{equation}
    and we will say that $\psi\in L^p\left(\mathbb{N}\right)$ if and only if $\|\psi\|_{L^p}<+\infty$.    
    \end{definition}

    \begin{remark}
    There is a direct relationship between $\dot{H}^s\left(\mathbb{T}\right)$ and 
    $\dot{H}^s\left(\mathbb{N}\right)$ for odd functions. Suppose that $u,v\in \dot{H}^s$ are odd. Then $u$ and $v$ must have unique Fourier sine series representations of the form
    \begin{align}
    u(x)&=
    -2\sum_{n=1}^\infty
    \psi_n \sin(nx) \\
    v(x)&=
    -2\sum_{n=1}^\infty
    \phi_n \sin(nx).
    \end{align}
    Furthermore, Parseval's identity can be expressed in this context as
    \begin{equation} 
    \|u\|_{\dot{H}^s
    \left(\mathbb{T}\right)}^2
    = 4\pi
    \|\psi\|_{\dot{H}^s
    \left(\mathbb{N}\right)}^2,
    \end{equation}
    and
    \begin{equation} \label{ParsevalID}
    \int_{-\pi}^\pi u(x)v(x) \diff x
    =
    4\pi\sum_{n=1}^\infty 
    \psi_n \phi_n
    \end{equation}
    Note that the normalization factor of $-2$ is chosen so that for all $n\in\mathbb{N}$,
    \begin{align}
    \hat{u}(n)&=i\psi_n \\
    \hat{u}(-n)&=-i\psi_n.
    \end{align}
    \end{remark}

\begin{remark}
    Both the conservation of energy and the estimates that lead to finite-time blowup for the Lyapunov functional given in \Cref{LyapunovProp} can be seen from the Fourier space side. In particular, the $\frac{-2}{n}$ Fourier sine series for $F$ was introduced to cancel the factor of $n$ in the infinite-system of ODEs \Cref{EvoEqnThm}.
\end{remark}

\begin{proposition} \label{EnergyProp}
     Suppose $u\in C\left([0,T_{max}),\dot{H}^s\right), s>\frac{3}{2}-2\alpha$ is an odd solution of the fractal Burgers equation.
     Then for all $0<t<T_{max}$,
     \begin{equation}
     \|\psi(\cdot,t)\|_{L^2}^2
     +2\nu\int_0^t 
     \|\psi(\cdot,\tau)\|_{\dot{H}^\alpha}^2\diff\tau
     =
     \left\|\psi^0\right\|_{L^2}^2.
     \end{equation} 
\end{proposition}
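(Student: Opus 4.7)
The plan is to observe that this proposition is essentially just a restatement of the energy equality \eqref{EnergyEquality} from \Cref{KiselevThm}, translated from physical space to Fourier space via Parseval's identity. So the cleanest route is to invoke the existing result rather than redo the derivation.

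First I would recall the energy equality for $u$ as an odd solution of the fractal Burgers equation,
\begin{equation}
\frac{1}{2}\|u(\cdot,t)\|_{L^2}^2
+\nu\int_0^t\|u(\cdot,\tau)\|_{\dot{H}^\alpha}^2\diff\tau
=\frac{1}{2}\left\|u^0\right\|_{L^2}^2,
\end{equation}
which applies here since our regularity hypothesis $s>\frac{3}{2}-2\alpha$ matches that of \Cref{KiselevThm}. Then, using the Fourier sine series representation from \Cref{EvoEqnThm} together with the version of Parseval's identity noted in the preceding remark,
\begin{equation}
\|u(\cdot,t)\|_{\dot{H}^s(\mathbb{T})}^2
= 4\pi \|\psi(\cdot,t)\|_{\dot{H}^s(\mathbb{N})}^2,
\end{equation}
I would substitute $s=0$ and $s=\alpha$ into the energy equality above. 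Dividing through by $2\pi$ yields the claimed identity.

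The alternative, if a self-contained Fourier-side derivation is preferred, is to multiply the ODE \eqref{FourierODE} by $\psi_n$ and sum on $n\in\mathbb{N}$. The dissipative term directly produces $-\nu\|\psi\|_{\dot{H}^\alpha}^2$, and the main task is then to show the trilinear nonlinear contribution vanishes. Writing
\begin{equation}
A=\sum_{n=2}^\infty n\psi_n\sum_{j=1}^{n-1}\psi_j\psi_{n-j},
\qquad
B=\sum_{n=1}^\infty n\psi_n\sum_{k=1}^\infty \psi_k\psi_{k+n},
\end{equation}
reindexing $A$ by $(j,k)$ with $k=n-j$ gives $A=\sum_{j,k\geq 1}(j+k)\psi_j\psi_k\psi_{j+k}$, while reindexing $B$ by $m=n+k$ gives $B=\sum_{j,k\geq 1}j\psi_j\psi_k\psi_{j+k}$, which by the $j\leftrightarrow k$ symmetry of $\psi_j\psi_k\psi_{j+k}$ equals $\frac{1}{2}A$. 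Hence $\frac{1}{2}A-B=0$, so $\frac{1}{2}\frac{\diff}{\diff t}\|\psi\|_{L^2}^2=-\nu\|\psi\|_{\dot{H}^\alpha}^2$, and integrating in time gives the result.

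Neither route presents a serious obstacle; the only delicate point in the direct Fourier-space derivation is justifying the interchange of summation and differentiation, as well as the Fubini step needed to reindex the double series. This is controlled by the faster-than-polynomial decay of $\psi_n(t)$ for $t>0$ noted in the proof of \Cref{EvoEqnThm}, which makes all of the relevant sums absolutely convergent. For these reasons I would opt for the first approach, deriving \Cref{EnergyProp} as a one-line consequence of \eqref{EnergyEquality} and Parseval's identity.
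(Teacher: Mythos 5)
Your proposal is correct, and it actually contains the paper's proof as your ``alternative'' route: the paper proves \Cref{EnergyProp} by multiplying \eqref{FourierODE} by $\psi_n$, summing, and showing the trilinear terms cancel by exactly the reindexing you describe ($A=\sum_{j,k\geq 1}(j+k)\psi_j\psi_k\psi_{j+k}=2B$, hence $\tfrac{1}{2}A-B=0$). Your preferred route --- reading the proposition off from the physical-space energy equality \eqref{EnergyEquality} together with Parseval's identity $\|u\|_{\dot H^s(\mathbb{T})}^2=4\pi\|\psi\|_{\dot H^s(\mathbb{N})}^2$ at $s=0$ and $s=\alpha$ --- is also valid and is essentially a one-line deduction; the constants work out after dividing by $2\pi$. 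The trade-off is expository rather than mathematical: the appendix is explicitly framed as a self-contained Fourier-space account of the dynamics (it is the order in which the author says the results were discovered), so the paper rederives energy conservation directly from the ODE system to exhibit the cancellation structure of the nonlinearity on the Fourier side --- the same symmetry that later drives \Cref{FourierLyapunovProp}. Your shorter route buys brevity at the cost of that motivation. Your remark about justifying the interchange of summation and differentiation via the faster-than-polynomial decay of $\psi_n(t)$ for $t>0$ is the right way to handle the only delicate point in the direct computation, which the paper passes over silently.
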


\begin{proof}
    Using the infinite system of ODEs from \Cref{EvoEqnThm}, we can see that for all $0<t<T_{max}$,
    \begin{equation}
    \frac{\diff}{\diff t}
    \|\psi(\cdot,t)\|_{L^2}^2
    =
    -2\nu \sum_{n=1^\infty}
    n^{2\alpha}\psi_n^2
    +\sum_{n=1}^\infty 
    \sum_{j=1}^{n-1}
    n\psi_n \psi_j\psi_{n-j}
    -2\sum_{n=1}^\infty\sum_{k=1}^\infty
    n\psi_n\psi_k\psi_{n+k}.
    \end{equation}
    Letting $m=n-j$, we find that
    \begin{align}
    \sum_{n=1}^\infty 
    \sum_{j=1}^{n-1}
    n\psi_n \psi_j\psi_{n-j}
    &=
    \sum_{j=1}^\infty 
    \sum_{m=1}^\infty
    (m+j)\psi_{m+j}\psi_m\psi_j \\
    &=
    2\sum_{j=1}^\infty 
    \sum_{m=1}^\infty
    m\psi_{m+j}\psi_m\psi_j,
    \end{align}
    due to the symmetry of the summand.
    Therefore we can conclude that
    \begin{equation}
    \sum_{n=1}^\infty 
    \sum_{j=1}^{n-1}
    n\psi_n \psi_j\psi_{n-j}
    =2\sum_{n=1}^\infty\sum_{k=1}^\infty
    n\psi_n\psi_k\psi_{n+k},
    \end{equation}
    and so for all $0<t<T_{max}$,
    \begin{equation}
    \frac{\diff}{\diff t}
    \|\psi(\cdot,t)\|_{L^2}^2
    =
    -2\nu \|\psi(\cdot,t)\|_{\dot{H}^\alpha}^2.
    \end{equation}
    Integrating this differential equation completes the proof.
\end{proof}

\begin{proposition}  \label{FourierLyapunovProp}
    Suppose $u\in C\left([0,T_{max}),\dot{H}^s\right), s>\frac{3}{2}-2\alpha$ is an odd solution of the fractal Burgers equation.
     Then for all $0<t<T_{max}$,
     \begin{equation}
    \frac{\diff}{\diff t}
    \sum_{n=1}^\infty \frac{\psi_n(t)}{n}
    =-\nu \sum_{n=1}^\infty \frac{\psi_n(t)}{n^{1-2\alpha}}
    +\frac{1}{2}\sum_{n=1}^\infty \psi_n(t)^2.
     \end{equation}
\end{proposition}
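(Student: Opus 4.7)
The approach is to derive this identity directly from the infinite ODE system \eqref{FourierODE} in \Cref{EvoEqnThm}, since the regularity hypothesis $u \in C([0,T_{max}),\dot{H}^s)$ with $s > \frac{3}{2} - 2\alpha$ combined with the higher regularity in \Cref{KiselevThm} guarantees that $\psi_n(t)$ decays faster than any polynomial in $n$, which justifies all the termwise operations and interchanges of summation that follow. First I would divide \eqref{FourierODE} by $n$ to obtain
\begin{equation}
\frac{\partial_t \psi_n}{n}
= -\nu\, \frac{\psi_n}{n^{1-2\alpha}}
+ \frac{1}{2}\sum_{j=1}^{n-1}\psi_j \psi_{n-j}
- \sum_{k=1}^\infty \psi_k \psi_{k+n},
\end{equation}
and then sum over $n \geq 1$. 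The dissipative term immediately yields $-\nu \sum_{n=1}^\infty \psi_n/n^{1-2\alpha}$, so the entire task reduces to proving the combinatorial identity
\begin{equation}
\frac{1}{2}\sum_{n=1}^\infty \sum_{j=1}^{n-1}\psi_j \psi_{n-j}
- \sum_{n=1}^\infty \sum_{k=1}^\infty \psi_k \psi_{k+n}
= \frac{1}{2}\sum_{n=1}^\infty \psi_n^2.
\end{equation}

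To handle the first double sum, I would change variables by setting $m = n - j$, so that $j, m$ independently range over $\mathbb{N}$, giving $\sum_{j,m \geq 1}\psi_j \psi_m = \bigl(\sum_j \psi_j\bigr)^2$. For the second double sum, setting $\ell = k+n$ with $n \geq 1$ reindexes it as $\sum_{1 \leq k < \ell}\psi_k \psi_\ell$. Expanding $\bigl(\sum_j \psi_j\bigr)^2 = \sum_j \psi_j^2 + 2\sum_{k < \ell}\psi_k \psi_\ell$ then yields exactly the cancellation
\begin{equation}
\frac{1}{2}\Bigl(\sum_j \psi_j^2 + 2\sum_{k<\ell}\psi_k \psi_\ell\Bigr)
- \sum_{k<\ell}\psi_k \psi_\ell
= \frac{1}{2}\sum_j \psi_j^2.
\end{equation}

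The main technical obstacle is justifying the interchange of $\partial_t$ with the infinite sum $\sum_n \psi_n/n$ and the termwise rearrangement of the double sums; both follow from absolute convergence provided by the rapid decay of $\psi_n$ for smooth solutions. As a sanity check consistent with the paper's narrative, this Fourier-space identity is simply Parseval's translation of \Cref{LyapunovProp}: the Fourier coefficients of $F$ are $\hat{F}(n) = i/n$, so $L(t) = 4\pi \sum_n \psi_n/n$ and $\langle (-\Delta)^{\alpha/2}u, (-\Delta)^{\alpha/2}F\rangle = 4\pi \sum_n \psi_n/n^{1-2\alpha}$ by \eqref{ParsevalID}, and dividing the identity of \Cref{LyapunovProp} by $4\pi$ recovers the stated formula. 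I would mention this as a remark to orient the reader, but present the direct ODE-based derivation as the proof, since the whole point of \Cref{FourierSection} is to exhibit the nonlinear cancellation at the Fourier level that originally motivated the introduction of $F$.
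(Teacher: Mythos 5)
Your proposal is correct and follows essentially the same route as the paper: divide the ODE system \eqref{FourierODE} by $n$, sum over $n$, reindex the convolution sum via $m=n-j$ and the tail sum via $\ell=k+n$, and observe that everything cancels except the diagonal $\frac{1}{2}\sum_n\psi_n^2$; the absolute convergence needed for the reindexing is supplied by the rapid decay of $\psi_n$ from higher regularity, exactly as you note. The only cosmetic difference is that you package the full double sum as $\bigl(\sum_j\psi_j\bigr)^2$ while the paper keeps it as a double sum and splits off the diagonal by symmetry.
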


\begin{proof}
    Plugging into the evolution equation for the Fourier coefficients in \Cref{EvoEqnThm}, we find that 
    \begin{equation}
    \frac{\diff}{\diff t}
    \sum_{n=1}^\infty \frac{\psi_n(t)}{n}
    =
    -\nu \sum_{n=1}^\infty \frac{\psi_n(t)}{n^{1-2\alpha}}
    +\frac{1}{2}\sum_{n=1}^\infty \sum_{j=1}^{n-1} \psi_j\psi_{n-j}
    -\sum_{n=1}^\infty \sum_{j=1}^\infty \psi_j\psi_{j+n}
    \end{equation}
    Letting $m=n-j$, we find that
    \begin{equation}
    \sum_{n=1}^\infty \sum_{j=1}^{n-1} \psi_j\psi_{n-j}
    =\sum_{j,m=1}^\infty \psi_m \psi_j.
    \end{equation}
    Likewise setting $m=j+n$, we find that
    \begin{equation}
    \sum_{n=1}^\infty \sum_{j=1}^\infty \psi_j\psi_{j+n}
    =
    \sum_{1\leq j<m<\infty} \psi_j \psi_m.
    \end{equation}
    Note that by symmetry 
    \begin{equation}
    \sum_{1\leq j<m<\infty} \psi_j \psi_m
    =
    \sum_{1\leq m<j<\infty} \psi_j \psi_m
    =
    \frac{1}{2}\sum_{\substack{j,m \in \mathbb{N} \\ j\neq m}}
    \psi_j\psi_m,
    \end{equation}
    and so we can conclude that
    \begin{equation}
    \frac{1}{2}\sum_{n=1}^\infty \sum_{j=1}^{n-1} \psi_j\psi_{n-j}
    -\sum_{n=1}^\infty \sum_{j=1}^\infty \psi_j\psi_{j+n}
    =
    \frac{1}{2}\sum_{n=1}^\infty \psi_n^2,
    \end{equation}
    which completes the proof.
\end{proof}

\begin{proposition}
    Suppose $u\in C\left([0,T_{max}),\dot{H}^s\right), s>\frac{3}{2}-2\alpha$ is an odd solution of the fractal Burgers equation with $\alpha<\frac{1}{2}$, and let $\Tilde{L}(t)$ be given by
    \begin{equation}
    \Tilde{L}(t)=\sum_{n=1}^\infty \frac{\psi_n(t)}{n}.
    \end{equation}
     Then for all $0<t<T_{max}$,
     \begin{equation}
     \frac{\diff \Tilde{L}}{\diff t}
     \geq 
     -\left(\sum_{n=1}^\infty 
    \frac{1}{n^{2(1-\alpha)}}\right)^\frac{1}{2}
     \nu\|\psi\|_{\dot{H}^\alpha}
     +\frac{3}{\pi^2} \Tilde{L}^2.
     \end{equation}
\end{proposition}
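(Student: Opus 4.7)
The plan is to start from the exact expression for $\tfrac{\diff \tilde{L}}{\diff t}$ already established in \Cref{FourierLyapunovProp}, namely
\begin{equation}
\frac{\diff \tilde{L}}{\diff t}
= -\nu\sum_{n=1}^\infty \frac{\psi_n}{n^{1-2\alpha}}
+\frac{1}{2}\sum_{n=1}^\infty \psi_n^2,
\end{equation}
and then estimate the dissipative term from above and the quadratic term from below in terms of $\tilde L$. This mirrors the physical-space argument in \Cref{LyapunovProp}, but now the proofs are simple applications of Cauchy--Schwarz on $\ell^2$ sequences.

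For the dissipative term, I would write $\frac{\psi_n}{n^{1-2\alpha}} = \frac{1}{n^{1-\alpha}} \cdot n^{\alpha}\psi_n$ and apply the Cauchy--Schwarz inequality for sequences to obtain
\begin{equation}
\left|\sum_{n=1}^\infty \frac{\psi_n}{n^{1-2\alpha}}\right|
\leq \left(\sum_{n=1}^\infty \frac{1}{n^{2(1-\alpha)}}\right)^{\!1/2}
\left(\sum_{n=1}^\infty n^{2\alpha}\psi_n^2\right)^{\!1/2}
= \left(\sum_{n=1}^\infty \frac{1}{n^{2(1-\alpha)}}\right)^{\!1/2}
\|\psi\|_{\dot H^\alpha}.
\end{equation}
The hypothesis $\alpha<\tfrac{1}{2}$ is exactly what makes the prefactor finite, which is where the criticality of $\alpha=\tfrac{1}{2}$ enters, just as in \Cref{LyapunovProp}.

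For the nonlinear term, the key observation is the reverse use of Cauchy--Schwarz: writing $\psi_n/n = (1/n)\cdot \psi_n$ gives
\begin{equation}
\tilde L^2
= \Bigl(\sum_{n=1}^\infty \frac{1}{n}\,\psi_n\Bigr)^{\!2}
\leq \Bigl(\sum_{n=1}^\infty \frac{1}{n^2}\Bigr)\Bigl(\sum_{n=1}^\infty \psi_n^2\Bigr)
= \frac{\pi^2}{6}\,\|\psi\|_{L^2}^2,
\end{equation}
using the Basel sum $\sum 1/n^2 = \pi^2/6$. Rearranging gives $\tfrac{1}{2}\|\psi\|_{L^2}^2 \geq \tfrac{3}{\pi^2}\tilde L^2$. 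Substituting both estimates into the expression from \Cref{FourierLyapunovProp} yields exactly the claimed differential inequality.

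There is no serious obstacle here: once one recognizes that this proposition is simply the Fourier-side translation of \Cref{LyapunovProp} via Parseval's identity \eqref{ParsevalID}, the proof reduces to two applications of Cauchy--Schwarz. The only mild point worth noting is that the constant $\tfrac{3}{\pi^2}$ appearing here agrees, under the normalization $L = 4\pi\tilde L$ and $\|F\|_{L^2}^2 = \tfrac{2\pi^3}{3}$, with the constant $\tfrac{3}{4\pi^3}$ from the physical-space version, which serves as a useful consistency check.
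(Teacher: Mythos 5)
Your proposal is correct and follows essentially the same route as the paper: start from the identity in \Cref{FourierLyapunovProp}, bound the dissipative term via Cauchy--Schwarz after splitting $n^{-(1-2\alpha)}=n^{-(1-\alpha)}\cdot n^{\alpha}$, and bound the nonlinear term from below using $\tilde L^2 \leq \frac{\pi^2}{6}\|\psi\|_{L^2}^2$ from the Basel sum. The only (harmless) difference is that you insert the absolute value on the dissipative sum explicitly, which is slightly more careful than the paper's write-up.
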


\begin{proof}
    We will prove this by finding a lower bound on the nonlinearity and an upper bound on the dissipation.
    Begin by applying H\"older's inequality, finding that
    \begin{equation}
    \sum_{n=1}^\infty
    \frac{\psi_n}{n}
    \leq 
    \left(\sum_{n=1}^\infty \psi_n^2\right)^\frac{1}{2}
    \left(\sum_{n=1}^\infty \frac{1}{n^2}\right)^\frac{1}{2},
    \end{equation}
    and therefore
    \begin{equation}
    \left(\sum_{n=1}^\infty
    \frac{\psi_n}{n}\right)^2
    \leq 
    \frac{\pi^2}{6} \sum_{n=1}^\infty \psi_n^2,
    \end{equation}
    which implies that 
    \begin{equation}
    \frac{1}{2}\sum_{n=1}^\infty \psi_n^2
    \geq \frac{3}{\pi^2} \Tilde{L}^2.
    \end{equation}

    Next observe that 
    \begin{align}
    \sum_{n=1}^\infty \frac{\psi_n}{n^{1-2\alpha}}
    &=
    \sum_{n=1}^\infty 
    \frac{1}{n^{1-\alpha}} n^\alpha \psi_n \\
    &\leq 
    \left(\sum_{n=1}^\infty
    \frac{1}{n^{2(1-\alpha)}}\right)^\frac{1}{2}
    \left(\sum_{n=1}^\infty
    n^{2\alpha} \psi_n^2\right)^\frac{1}{2},
    \end{align}
    and this completes the proof.
\end{proof}

\begin{remark}
    Note that changing the order of summation for the Fourier coefficients in \Cref{FourierLyapunovProp} is equivalent to the integration by parts in \Cref{InnerProductLemma}. In both cases we need enough regularity for the result to apply, because the change to the order of summation is only legitimate when the Fourier coefficients are absolutely summable, which they must be due to higher regularity.
\end{remark}

\section{The general family of attractors} \label{VeryGeneralSection}

In this section, we will prove \Cref{VeryGeneralAttractorThmIntro,VeryGeneralBlowupThmIntro}, showing that not only $F$, but a whole range of functions sharing similar properties act as $L^2$ attractors for odd solutions of the inviscid Burgers equation (and as Lyapunov functionals in the viscous case). First we generalize \Cref{InnerProductLemma} to more general functions $H$.

\begin{lemma} \label{GeneralInnerProductLemma}
     Suppose that $H\in L^\infty\left(\mathbb{T}\right)$ is odd, differentiable on $(0,\pi)$, and that 
    \begin{equation}
    m:= \inf_{0<x<\pi} H'(x)
    >0.
    \end{equation}
    Then for all $u\in C^1\left(\mathbb{T}\right)$,
    \begin{equation}
    -\left<H,\frac{1}{2}\partial_x u^2\right>
    \geq 
    \frac{m}{2} \|u\|_{L^2}^2.
    \end{equation}
\end{lemma}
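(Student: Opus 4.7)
The plan is to adapt the proof of \Cref{InnerProductLemma} essentially verbatim, replacing the explicit identity $F'=1$ on $(0,\pi)$ by the pointwise lower bound $H'\geq m$. One observation that should be flagged up front: although the statement reads ``for all $u\in C^1(\mathbb{T})$,'' the inequality really needs $u$ to be odd. For generic $u$, the distributional derivative of $H$ carries negative point masses at any jump point (the remark after \Cref{VeryGeneralBlowupThmIntro} guarantees $H$ has at least one such jump), and these produce terms of the wrong sign. Since the lemma will only be applied to odd Burgers solutions, I assume $u$ is odd; the point is that oddness plus continuity forces $u(0)=0$ and periodicity plus oddness forces $u(\pi)=u(-\pi)=0$, so $u$ vanishes precisely at the possible nonsmoothness loci of $H$.

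First I exploit symmetry. Since $H$ and $u$ are odd, $\partial_x u^2=2u\partial_x u$ is odd, and $H\,\partial_x u^2$ is even, so
\begin{equation}
-\left\langle H,\tfrac{1}{2}\partial_x u^2\right\rangle
= -\int_0^\pi H(x)\,\partial_x u^2(x)\,dx.
\end{equation}
On the open interval $(0,\pi)$ the function $H$ is $C^1$ by hypothesis, and the one-sided limits $H(0^+)$ and $H(\pi^-)$ exist and are finite since $H\in L^\infty$ and is monotone there. Integration by parts on $(0,\pi)$ gives
\begin{equation}
-\int_0^\pi H\,\partial_x u^2\,dx
= H(0^+)\,u(0)^2 - H(\pi^-)\,u(\pi)^2 + \int_0^\pi H'(x)\,u(x)^2\,dx
= \int_0^\pi H'(x)\,u(x)^2\,dx,
\end{equation}
the boundary terms being killed by $u(0)=u(\pi)=0$. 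Finally the pointwise bound $H'\geq m$ and evenness of $u^2$ yield
\begin{equation}
\int_0^\pi H'(x)\,u(x)^2\,dx
\geq m\int_0^\pi u(x)^2\,dx
= \frac{m}{2}\|u\|_{L^2}^2,
\end{equation}
which is the desired estimate.

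The one genuine technical point is justifying the integration by parts when $H$ is only differentiable on the open interval and possibly discontinuous at the endpoints. This is not really an obstacle: $u^2$ is $C^1$ with $u^2(0)=u^2(\pi)=0$, so $u^2$ vanishes quadratically near both endpoints (like $(u'(0))^2 x^2$ near $0$, and similarly near $\pi$), which dominates the merely bounded $H$. Carrying out the integration by parts on $[\varepsilon,\pi-\varepsilon]$ and letting $\varepsilon\to 0$ makes the argument rigorous, with the pre-limit boundary contributions vanishing at rate $O(\varepsilon^2)$.
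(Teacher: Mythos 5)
Your proof is correct and follows essentially the same route as the paper's: reduce to $\int_0^\pi$ by evenness of $H\,\partial_x u^2$, integrate by parts, kill the boundary terms using $u(0)=u(\pi)=0$, and apply $H'\geq m$. Your observation that the statement should require $u$ odd is well taken --- the paper's own proof also uses oddness of $u$ both for the symmetry reduction and for the vanishing of the boundary terms --- and your $\varepsilon$-truncation justification of the integration by parts is a welcome extra degree of care.
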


\begin{proof}
    Observe that $\frac{1}{2}\partial_x u^2$ is also odd, and therefore
    \begin{align}
    -\left<H,\frac{1}{2}\partial_x u^2\right>
    &=
    -\int_0^\pi H(x) \partial_x u^2(x) \\
    &=
    -\int_0^\pi H'(x) u^2(x) \diff x \\
    &\geq 
    m \int_0^\pi u^2(x) \diff x \\
    &=
    \frac{m}{2} \|u\|_{L^2}^2.
    \end{align}
    Note that the boundary terms vanish because $u$ is odd and continuous and $H$ is bounded, so 
    \begin{equation}
    \lim_{x\to 0}H(x)u^2(x)=
    \lim_{x\to \pi}H(x)u^2(x)
    =0.
    \end{equation}
\end{proof}

\begin{theorem} \label{VeryGeneralAttractorThm}
    Suppose that $H\in L^\infty\left(\mathbb{T}\right)$ is odd, differentiable on $(0,\pi)$, and that 
    \begin{equation}
    m:= \inf_{0<x<\pi} H'(x)
    >0.
    \end{equation}
    Suppose that $u\in C\left([0,T_{max});
    C^1\left(\mathbb{T}\right)\right)$
    is an odd solution of Burgers equation.
    Then for all $0\leq t<T_{max}$,
    \begin{equation}
    \|u(\cdot,t)-H\|_{L^2}^2
    \leq 
    \left\|u^0-H\right\|_{L^2}^2
    -mt.
    \end{equation}
    Note that this implies the solution blows up in finite-time with
    \begin{equation}
    T_{max}\leq\frac{\left\|u^0-H\right\|_{L^2}^2}{m}.
    \end{equation}
\end{theorem}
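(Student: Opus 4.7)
The plan is to mimic the proof of \Cref{GeneralAttractorThm} with \Cref{GeneralInnerProductLemma} in place of the identity \Cref{InnerProductLemma}. First I would confirm that oddness is preserved by the inviscid Burgers flow: the function $v(x,t) := -u(-x,t)$ also solves Burgers equation with the same initial data, so uniqueness of $C^1$ solutions on the torus forces $u(\cdot,t)$ to remain odd throughout $[0,T_{max})$. This makes \Cref{GeneralInnerProductLemma} applicable at every time.

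Next, expand $\|u(\cdot,t)-H\|_{L^2}^2 = \|u(\cdot,t)\|_{L^2}^2 - 2\langle u(\cdot,t), H\rangle + \|H\|_{L^2}^2$. Since $\|H\|_{L^2}^2$ is constant and $\|u(\cdot,t)\|_{L^2}^2 = \|u^0\|_{L^2}^2$ by $L^2$ conservation for the inviscid Burgers equation, differentiating and substituting $\partial_t u = -u\partial_x u$ gives
\[
\frac{d}{dt}\|u(\cdot,t)-H\|_{L^2}^2 = -2\langle \partial_t u, H\rangle = 2\langle u\partial_x u, H\rangle = \langle H, \partial_x u^2\rangle.
\]
\Cref{GeneralInnerProductLemma} bounds $\langle H, \partial_x u^2\rangle \leq -m\|u(\cdot,t)\|_{L^2}^2$, and $L^2$ conservation replaces the right side with the time-independent constant $-m\|u^0\|_{L^2}^2$.

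Integrating this differential inequality on $[0,t]$ produces the asserted linear-in-time upper bound on $\|u(\cdot,t)-H\|_{L^2}^2$, and the bound on $T_{max}$ follows by requiring the squared distance to remain nonnegative. Setting $H = rF$ so that $m = r$ recovers \Cref{GeneralAttractorThm} exactly (since \Cref{GeneralInnerProductLemma} becomes an equality in that case). The only step requiring any care is preservation of the odd subspace under the flow, which is immediate from uniqueness of $C^1$ solutions; beyond that, the argument is a one-line combination of \Cref{GeneralInnerProductLemma} with $L^2$ conservation, and no substantive obstacle arises.
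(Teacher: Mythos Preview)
Your proposal is correct and follows the paper's proof essentially line for line: expand the squared distance, use $L^2$ conservation to reduce the time derivative to $\langle H,\partial_x u^2\rangle$, apply \Cref{GeneralInnerProductLemma}, and integrate. (Both your argument and the paper's actually yield the decrement $-m\left\|u^0\right\|_{L^2}^2 t$ rather than the stated $-mt$; the latter appears to be a typo in the theorem statement.)
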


\begin{proof}
    First observe that due to conservation of energy,
    \begin{align}
    \|u(\cdot,t)-H\|_{L^2}^2
    &=
    \|u(\cdot,t)\|_{L^2}^2+\|H\|_{L^2}^2
    -2\left<H,u(\cdot,t)\right> \\
    &=
    \left\|u^0\right\|_{L^2}^2+\|H\|_{L^2}^2
    -2\left<H,u(\cdot,t)\right>.
    \end{align}
    Therefore we can apply \Cref{GeneralInnerProductLemma} to find that
    \begin{align}
    \frac{\diff}{\diff t}
    \|u(\cdot,t)-H\|_{L^2}^2
    &=
    -2\frac{\diff}{\diff t}
    \left<H,u(\cdot,t)\right> \\
    &=
    \left<H,\partial_x u^2\right> \\
    &\leq 
    -m\|u\|_{L^2}^2 \\
    &=
    -m\left\|u^0\right\|_{L^2}^2.
    \end{align}
    Integrating this differential inequality completes the proof.
\end{proof}

\begin{theorem}  
 Suppose that $H\in \dot{H}^\alpha\left(\mathbb{T}\right) \cap L^\infty\left(\mathbb{T}\right), 0<\alpha<\frac{1}{2}$ is odd, differentiable on $(0,\pi)$, and that 
    \begin{equation}
    m:= \inf_{0<x<\pi} H'(x)
    >0.
    \end{equation}    
    Suppose $u\in C\left([0,T_{max}),\dot{H}^s\right), s>\frac{3}{2}-2\alpha$ is an odd solution of the fractal Burgers equation with fractional dissipation $\alpha$ and with odd initial data satisfying
    \begin{equation}
    \Tilde{L}_0^3
    >\frac{12}{m}
    \|H\|_{\dot{H}^\alpha}^2
    \|H\|_{L^2}^2
    \left\|u^0\right\|_{L^2}^2\nu,
    \end{equation}
    where
    \begin{equation}
    \Tilde{L}_0
    =
    \int_{-\pi}^\pi
    H(x) u^0(x)\diff x.
    \end{equation}
Then the solution of the fractal Burgers equation blows up in finite-time
\begin{equation}
T_{max}<\frac{6\|H\|_{L^2}^2}{m\Tilde{L}_0}.
\end{equation}
\end{theorem}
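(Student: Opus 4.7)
The plan is to repeat the strategy of \Cref{BurgersBlowupThm} essentially verbatim, but with $F$ replaced by the general function $H$ and with \Cref{InnerProductLemma} replaced by the one-sided bound supplied by \Cref{GeneralInnerProductLemma}. Define the Lyapunov functional $\Tilde{L}(t) = \langle H, u(\cdot,t)\rangle$ and differentiate it against the fractal Burgers equation to obtain
\begin{equation}
\frac{\diff \Tilde{L}}{\diff t}
=
-\nu \left<(-\Delta)^{\alpha/2}H,(-\Delta)^{\alpha/2}u\right>
-\left<H,\tfrac{1}{2}\partial_x u^2\right>,
\end{equation}
where self-adjointness of $(-\Delta)^{\alpha/2}$ is used on the dissipative term. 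The nonlinear term is bounded below by $\frac{m}{2}\|u\|_{L^2}^2$ using \Cref{GeneralInnerProductLemma}, which is exactly the point where the hypotheses on $H$ (odd, $H^\prime \geq m$ on $(0,\pi)$, bounded) enter.

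Next I would turn this into a Riccati-type inequality purely in $\Tilde{L}$. Cauchy--Schwarz on the dissipative inner product gives
\begin{equation}
\left<(-\Delta)^{\alpha/2}H,(-\Delta)^{\alpha/2}u\right>
\leq \|H\|_{\dot{H}^\alpha}\|u\|_{\dot{H}^\alpha},
\end{equation}
while Cauchy--Schwarz on $\Tilde{L}$ itself yields $|\Tilde{L}|\leq \|H\|_{L^2}\|u\|_{L^2}$, so that $\|u\|_{L^2}^2 \geq \Tilde{L}^2/\|H\|_{L^2}^2$. Combining these gives
\begin{equation}
\frac{\diff \Tilde{L}}{\diff t}
\geq
-\nu\|H\|_{\dot{H}^\alpha}\|u(\cdot,t)\|_{\dot{H}^\alpha}
+\frac{m}{2\|H\|_{L^2}^2}\Tilde{L}^2,
\end{equation}
which is of the form required for \Cref{ODElemma} with $\kappa = \frac{m}{2\|H\|_{L^2}^2}$ and $f(t) = \nu \|H\|_{\dot{H}^\alpha}\|u(\cdot,t)\|_{\dot{H}^\alpha}$.

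The only remaining task is to check the integrability hypothesis on $f$. This is where the energy equality \eqref{EnergyEquality} is used: Cauchy--Schwarz in time gives
\begin{equation}
\int_0^t f(\tau)\diff \tau
\leq
\nu\|H\|_{\dot{H}^\alpha} t^{1/2}
\left(\int_0^t \|u(\cdot,\tau)\|_{\dot{H}^\alpha}^2\diff \tau\right)^{1/2}
\leq
\frac{\nu^{1/2}}{\sqrt{2}}\|H\|_{\dot{H}^\alpha}\left\|u^0\right\|_{L^2}\,t^{1/2},
\end{equation}
so one may take $M = \nu^{1/2}\|H\|_{\dot{H}^\alpha}\|u^0\|_{L^2}/\sqrt{2}$. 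With this $\kappa$ and $M$, the hypothesis $y_0^3 \geq 12M^2/\kappa$ of \Cref{ODElemma} unwinds to exactly
\begin{equation}
\Tilde{L}_0^3 \geq \frac{12}{m}\|H\|_{\dot{H}^\alpha}^2\|H\|_{L^2}^2 \left\|u^0\right\|_{L^2}^2 \nu,
\end{equation}
which is the assumed lower bound on $\Tilde{L}_0$. Applying \Cref{ODElemma} therefore yields $T_{max}\leq \frac{3}{\kappa \Tilde{L}_0} = \frac{6\|H\|_{L^2}^2}{m\Tilde{L}_0}$, as required. I do not anticipate a serious obstacle here; the proof is essentially a transcription of \Cref{BurgersBlowupThm} with $\frac{1}{2}$ in the key identity replaced by the lower bound $m/2$ and $\|F\|_{L^2}^2 = \frac{2\pi^3}{3}$ replaced by $\|H\|_{L^2}^2$. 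The only place needing mild care is confirming that $H \in \dot{H}^\alpha$ makes the dissipative inner product well-defined, which is exactly why the hypothesis $H \in \dot{H}^\alpha$ is imposed; as noted in the remark following the statement, this is why $\alpha < \tfrac{1}{2}$ is required, since the jump discontinuity forced by oddness and strict monotonicity on $(0,\pi)$ prevents $H \in \dot{H}^\alpha$ for $\alpha \geq \tfrac{1}{2}$.
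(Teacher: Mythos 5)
Your proposal is correct and follows essentially the same route as the paper's own proof: the same Lyapunov functional $\Tilde{L}(t)=\langle H,u(\cdot,t)\rangle$, the lower bound $\frac{m}{2}\|u\|_{L^2}^2\geq\frac{m}{2\|H\|_{L^2}^2}\Tilde{L}^2$ from \Cref{GeneralInnerProductLemma} and Cauchy--Schwarz, the same bound on $\int_0^t f$ via the energy equality, and the same application of \Cref{ODElemma} with $\kappa=\frac{m}{2\|H\|_{L^2}^2}$ and $M=\frac{\|H\|_{\dot{H}^\alpha}}{\sqrt{2}}\|u^0\|_{L^2}\nu^{1/2}$. The arithmetic unwinding the hypothesis $y_0^3\geq 12M^2/\kappa$ and the resulting blowup time both check out.
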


\begin{proof}
    In this case we will take our Lyapunov functional to be
    \begin{equation}
    \Tilde{L}(t)=
    \int_{-\pi}^\pi H(x)u(x,t)\diff x.
    \end{equation}
    Applying \Cref{GeneralInnerProductLemma} and H\"older's inequality, we can see that
    \begin{align}
    \frac{\diff\Tilde{L}}{\diff t}
    &=
    -\nu\left<
    (-\Delta)^\frac{\alpha}{2}H,
    (-\Delta)^\frac{\alpha}{2}\right>
    -\left<H,\frac{1}{2}\partial_x u^2\right> \\
    &\geq 
    -\nu\|H\|_{\dot{H}^\alpha} 
    \|u\|_{\dot{H}^\alpha}
    +
    \frac{m}{2}\left\|u\right\|_{L^2}^2 \\
    &\geq 
    -\nu\|H\|_{\dot{H}^\alpha} 
    \|u\|_{\dot{H}^\alpha}
    +
    \frac{m}{2\|H\|_{L^2}^2} \Tilde{L}^2.
    \end{align}
    Let $f(t)$ be giving by
    \begin{equation}
    f(t)=\nu\|H\|_{\dot{H}^\alpha} 
    \|u(\cdot,t)\|_{\dot{H}^\alpha}.
    \end{equation}
    Observe that by applying H\"older's inequality to $1$ and $\|u(\cdot,t)\|_{H^\alpha}$ and making use of the energy equality, we can see that
    \begin{align}
    \int_0^t f(\tau)\diff \tau
    &\leq 
    \nu\|H\|_{\dot{H}^\alpha} 
    \left(\int_0^t 
    \|u(\cdot,\tau)\|_{\dot{H}^\alpha}^2 
    \diff \tau\right)^\frac{1}{2} 
    t^\frac{1}{2}\\
    &\leq 
    \nu\|H\|_{\dot{H}^\alpha} 
    \left( \frac{1}{2\nu}
    \left\|u^0\right\|_{L^2}^2 
    \diff \tau\right)^\frac{1}{2}
     t^\frac{1}{2} \\
    &=
    \frac{\|H\|_{\dot{H}^\alpha}\sqrt{\nu}}
    {\sqrt{2}} \left\|u^0\right\|_{L^2}  
    t^\frac{1}{2},
    \end{align}
    and that we have shown above that for all $0\leq t<T_{max}$,
    \begin{equation}
    \frac{\diff\Tilde{L}}{\diff t}
    \geq -f+
    \frac{m}{2\|H\|_{L^2}^2} \Tilde{L}^2.
    \end{equation}
    Applying \Cref{ODElemma} with
    \begin{align}
    M&= \frac{\|H\|_{\dot{H}^\alpha}}
    {\sqrt{2}} \left\|u^0\right\|_{L^2}
    \nu^\frac{1}{2} \\
    \kappa &= \frac{m}{2\|H\|_{L^2}^2},
    \end{align}
    completes the proof.
\end{proof}

\section*{Acknowledgments}

This material is based upon work supported by the Swedish Research Council under grant no. 2021-06594 while the author was in residence at the Institut Mittag-Leffler in Djursholm, Sweden during the Autumn 2023 semester.
The author would also like to thank Prof. Tarek Elgindi and Prof. Alexander Kiselev for their helpful comments and suggestions during his visit to Duke University, and Prof. Stephen Gustafson for the observation that arguments for the attractor $F$ could apply to a broader range of functions.
\bibliography{bib}

\end{document}